\def\ve{\varepsilon}
\def\urho{\underline{\rho}}
\def\orho{\overline{\rho}}
\def\leq{\leqslant}
\def\geq{\geqslant}
\def\half{{\textstyle\frac12}}
\def\onetwelve{{\textstyle\frac{1}{12}}}
\def\*#1{\mathbf{#1}}
\newtheorem{theorem}{\indent Theorem}
\newtheorem{lemma}{\indent Lemma}
\newtheorem{remark}{\indent Remark}
\begin{document}
\begin{center}
{\large\textbf{On Properties of Compact 4th order Finite-Difference\\[2mm] Schemes for the Variable Coefficient Wave Equation}}
\end{center}
\medskip\par\noindent{{Alexander Zlotnik$^{a,b}$
\footnote{\small Corresponding author.\\
 E-mail addresses: \text{azlotnik@hse.ru} (A. Zlotnik), \text{raimondas.ciegis@vgtu.lt} (R. \v{C}iegis)},
Raimondas \v{C}iegis$^{c}$}
\smallskip\par\noindent$^{a}$
{\small Higher School of Economics University, Pokrovskii bd. 11, 109028 Moscow, Russia
\par\noindent
$^b$ {\small Keldysh Institute of Applied Mathematics, Miusskaya sqr., 4, 125047 Moscow, Russia}
\par\noindent
$^c$ {\small Vilnius Gediminas Technical University, Saul{\. e}tekio al. 11, LT-10223 Vilnius, Lithuania}}}
\begin{abstract}
\noindent We consider an initial-boundary value problem for the $n$-dimensional wave equation with the variable sound speed, $n\geq 1$.
We construct three-level implicit in time and compact in space (three-point in each space direction) 4th order finite-difference schemes on the uniform rectangular meshes including their one-parameter (for $n=2$) and three-parameter (for $n=3$) families.
We also show that some already known methods can be converted into such schemes.
In a unified manner, we prove the conditional stability of schemes in the strong and weak energy norms together with the 4th order error estimate under natural conditions on the time step.
We also transform an unconditionally stable 4th order two-level scheme suggested for $n=2$ to the three-level form, extend it for any $n\geq 1$ and prove its stability.
We also give {an example} of
a compact scheme for non-uniform in space and time rectangular meshes.
We suggest simple fast iterative methods based on FFT to implement the schemes.
A new effective initial guess to start iterations is given too.
We also present promising results of numerical experiments.
\end{abstract}
\par Keywords: wave equation, variable sound speed, compact higher-order scheme, stability, iterative methods

\smallskip\par AMS Subject Classification: 65M06; 65M12; 65M15; 65N22.

\section{\large Introduction}
\label{s:1}

\par Vast literature is devoted to compact higher-order finite-difference schemes for PDEs including elliptic, parabolic, 2nd order hyperbolic and the time-dependent Schr\"{o}dinger equation, etc.
This is due to the fact that the formulas and implementation of compact schemes are not
{substantially more complex in comparison with}
the most standard 2nd order schemes but the error of compact schemes is usually several orders of magnitude less
on the same mesh leading to significantly less computational work to ensure given accuracy.

\par In recent years, the case of initial-boundary value problems for the multidimensional wave equation with the variable sound speed $c(x)$ has attracted
{a lot of} attention, see, in particular, \cite{BTT18,HLZ19,LLL19,STT19}, where much more relevant references can be found.
Among them, in papers \cite{BTT18} for 2D case and \cite{STT19} for 3D case, some three-term recurrent in time compact higher-order methods on the square spatial mesh have been constructed.
In the case $c(x)\equiv \textrm{const}$, the spectral {stability}
analysis of the methods has been given.
The methods are conditionally stable but implicit in time.
Therefore, to implement the methods, a direct method (for $n=2$) and iterative methods of the conjugate gradient and multigrid types (for $n=2,3$) have been considered and verified.
\par In the case $n=2$, another two-level vector in time 4th order method has been constructed in \cite{HLZ19}.
Here ``the vector method'' means that approximations for the solution $u$ and its weighted time derivative $\frac{1}{c^2(x)}\partial_tu$
are constructed jointly.
This method is unconditionally stable, but it exploits rather cumbersome approximations to the elliptic part of the wave equation involving triple application of a mesh Laplace operator and the inverse operators to the Numerov averages in each spatial coordinate.
Consequently, in our opinion, it can hardly be called compact.
Note that other two-level vector methods were studied, in particular, in \cite{BDS79,Z94}.

\par In the recent paper \cite{ZK20}, implicit three-level in time
{and}
compact in space ({the three-point} in each space direction) finite-difference schemes on uniform rectangular meshes have been constructed by other techniques for the initial-boundary value problem (IBVP) with the nonhomogeneous Dirichlet boundary condition for the $n$-dimensional wave equation with constant coefficients, $n\geq 1$.
The conditional stability together with 4th order error estimates have been rigorously proved for the schemes.
{Extension} of the schemes to the case of non-uniform in space and time rectangular meshes has been also given.

\par In this paper, we accomplish a generalization of compact schemes from \cite{ZK20} to the case $c(x)\not\equiv\textrm{const}$ based on a new technique related to averaging the wave equation.
Moreover, we present one-parameter (for $n=2$) and three-parameter (for $n=3$) families of compact schemes.
We also show how the methods from \cite{BTT18,STT19} can be rewritten as three-level compact schemes for the wave equation,
and they are included into these families of
compact approximations of the wave equation in the case of square meshes up to
our simpler approximation of the free term in the equation.
But notice that we use another (also implicit) approximation of the second initial condition $\partial_tu|_{t=0}=u_1$ similar and closely connected to the approximation of the wave equation itself (going back, in particular, to \cite{Z94}).
We also apply an operator technique that greatly simplifies and shortens
{derivation, presentation,}
generalization and analysis of the schemes.

\par We first consider three-level in time finite-difference schemes with
a weight $\sigma$ and the variable coefficient {$c(x)$}
in an abstract form and prove a theorem on stability of these schemes in
the strong (standard) and weak energy norms with respect to the initial data
and free {term}.
The stability is unconditional for $\sigma\geq\frac14$ and conditional for $\sigma<\frac14$.
In the latter case, practical stability conditions on the time step of the mesh are often derived by applying the spectral method in the case of the $c(x)\equiv \textrm{const}$ and then taking the maximal value of $c(x)$ as this constant.
The {presented} theorem justifies that such an approach is correct, in particular, for constructed compact schemes where $\sigma=\frac{1}{12}$.
As a corollary of the main theorem,
we rigorously prove the 4th order error estimate in the strong energy norm for constructed compact schemes.
Notice that the spectral analysis for $c(x)\not\equiv \textrm{const}$ is impossible,
and {our analysis is}
based on the energy method; moreover, namely stability theorems of the mentioned
type {allow us}
to prove rigorous error estimates.
We emphasize that the rigorous results on the 4th order approximation errors (in the standard sense), stability, error bounds and discrete energy conservation law are new
{results} in the case $c(x)\not\equiv \textrm{const}$,
and {they ensure}
{a strong} theoretical {basis}
{for such compact} schemes.

\par Next we consider the method from \cite{HLZ19} mentioned above.
Excluding the auxiliary unknown function approximating $\frac{1}{c^2(x)}\partial_tu$, we reduce it to the three-level method with the weight $\sigma=\frac14$.
We also generalize it to any $n\geq 1$ and prove its unconditional stability based on the above general stability theorem, now for $\sigma=\frac14$.

\par We also present an example of extending a three-level compact scheme for any $n\geq 1$ to the case of non-uniform in space and time rectangular meshes.
Note that compact schemes on non-uniform meshes for other equations were considered, in particular, in \cite{ChS18,JIS84,RCM14,Z15}.

\par In the case of the uniform rectangular mesh, we construct simple efficient one-step and $N$-step iterative methods to implement the schemes at each time level, with a preconditioner using FFT.
Under the stability condition, they are fast convergent, and the convergence rate is independent both on the meshes and $c(x)$, in particular, on the spread of its values, that is non-trivial and important {property}
for some applications.
We also suggest how to select {an effective initial} guess, which is close to the sought solution at each time level.
This choice is based on a simplified scheme for $\sigma=0$ and also assumes usage of FFT.
The one-step iterative method is applied for several numerical experiments.

\par The paper is organized as follows.
In Section \ref{general3level}, we state the IBVP for the wave equation with the variable sound speed and consider three-level in time finite-difference schemes with the weight $\sigma$ in general form.
We adapt {one recent} theorem {to prove the}
stability in the strong and weak energy norms for such schemes;
the {obtained stability estimates are} unconditional for $\sigma\geq\frac14$ and
conditional for $\sigma<\frac14$.
The energy conservation law for these schemes is written as well.
We also transform methods from \cite{BTT18,STT19} to the form of considered schemes.
In Section \ref{numerovschemes}, we generalize schemes from \cite{ZK20} to the case of the variable sound speed.
Moreover, we present one-parameter (for $n=2$) and three-parameter (for $n=3$) families of compact schemes and justify the 4th approximation order of the schemes.
We also compare the methods from \cite{BTT18,STT19} with the constructed schemes.
For these schemes, we prove theorems on their conditional stability and 4th order error bound in the strong energy norm.
Section \ref{uncond stable scheme} presents the three-level form of the two-level method from \cite{HLZ19}, its extension to any $n\geq 1$ and a theorem on their unconditional stability bounds together with the energy conservation law.
In Section \ref{non inif mesh}, we also demonsrate how to extend one of the compact schemes suitable for any $n\geq 1$ to the case of non-uniform in space and time rectangular meshes.

\par The last Section \ref{numerexperiments} is devoted to the fast iterative one-step and $N$-step methods to implement the constructed compact schemes on the uniform mesh including theorems on their convergence.
We also present results of numerical experiments on testing the constructed schemes and the one-step iterative method in 2D case
including the wave propagation in a three-layer 2D medium initiated by the Ricker-type wavelet (with discontinuous $c(x)$ and the $\delta$-shaped free term in the wave equation).

\section{\large Symmetric three-level method for second order hyperbolic equations with a variable coefficient and its stability theorem}
\label{general3level}
\setcounter{equation}{0}
\setcounter{lemma}{0}
\setcounter{theorem}{0}
\setcounter{remark}{0}

We consider the following initial-boundary value problem (IBVP) with the Dirichlet boundary condition for the wave equation in a generalized form
\begin{gather}
 \rho(x)\partial_t^2u(x,t)-(a_1^2\partial_1^2+\ldots+a_n^2\partial_n^2)u(x,t)
=f(x,t)\ \ \text{in}\ \ Q_T=\Omega\times (0,T);
\label{hyperb2eq}
\\[1mm]
 u|_{\Gamma_T}=g(x,t);\ \ u|_{t=0}=u_0(x),\ \ \partial_tu|_{t=0}=u_1(x),\ \ x\in\Omega.
\label{hyperb2ibc}
\end{gather}
Here  $0<\urho\leq\rho(x)$ in $\bar{\Omega}$,
$a_1>0,\ldots,a_n>0$ are constants (we take them different for uniformity with \cite{ZK20} and to distinct difference operators) and
$x=(x_1,\ldots,x_n)$, $n\geq 1$.
Also $\Omega$ is a bounded domain in $\mathbb{R}^n$,
$\partial\Omega$ its boundary and $\Gamma_T=\partial\Omega\times (0,T)$ is the lateral surface of $Q_T$.
Note that $c(x)=\frac{1}{\sqrt{\rho(x)}}$ is the variable sound speed in the case $a_1=\ldots=a_n=1$.

\par In this section, we consider a general three-level method with a weight for the IBVP \eqref{hyperb2eq}-\eqref{hyperb2ibc} with $g=0$. We present theorem on its stability and give the discrete energy conservation law
which are applied below in Sections 3 and 4 for various specific conditionally and unconditionally stable 4th order schemes to ensure their stability and the discrete energy conservation laws.
Such an approach is standard in the theory of difference schemes, for example, see \cite{S77}.

\par Let $H_h$ be a Euclidean
space of functions given on a spatial mesh endowed with an inner product $(\cdot,\cdot)_h$ and the corresponding norm $\|\cdot\|_h$, where $h$ is the parameter related to this mesh.
Let $B_h$ and $A_h$ be linear operators in $H_h$ having the properties $B_h=B_h^*>0$ and $A_h=A_h^*>0$.
As applied to the wave equation \eqref{hyperb2eq}, $B_h$ is an averaging operator and $A_h$ is an approximation to its elliptic part $-(a_1^2\partial_1^2+\ldots+a_n^2\partial_n^2)$.
For any operator $C_h=C_h^*>0$ in $H_h$, one can define the norm $\|w\|_{C_h}=(C_hw,w)_h^{1/2}$ in $H_h$ generated by it.

\par We introduce the uniform mesh $\overline\omega_{h_t}=\{t_m=mh_t\}_{m=0}^M$ on a segment $[0,T]$, with the step $h_t=T/M>0$ and
$M\geq 2$.
Let $\omega_{h_t}=\{t_m\}_{m=1}^{M-1}$ be the internal part of $\overline\omega_{h_t}$.
We introduce the mesh averages and difference operators
\[
 \bar{s}_ty=\frac{\check{y}+y}{2},\,\
 s_ty=\frac{y+\hat{y}}{2},\,\
 \bar{\delta}_ty=\frac{y-\check{y}}{h_t},\,\
 \delta_ty=\frac{\hat{y}-y}{h_t},\,\
 \mathring{\delta}_ty=\frac{\hat{y}-\check{y}}{2h_t},\
 \Lambda_ty=\delta_t\bar{\delta}_ty=\frac{\hat{y}-2y+\check{y}}{h_t^2}
\]
with $y^m=y(t_m)$, $\check{y}^{m}=y^{m-1}$ and $\hat{y}^{m}=y^{m+1}$,
as well as the operator of summation with the variable upper limit
\[
 I_{h_t}^my=h_t\sum_{l=1}^m y^l\ \ \text{for}\ \ 1\leq m\leq M,\ \ I_{h_t}^0y=0.
\]
\par We consider the following symmetric three-level in $t$ method with a weight (parameter) $\sigma$ for the IBVP \eqref{hyperb2eq}-\eqref{hyperb2ibc} with $g=0$:
\begin{gather}
 B_h(\rho\Lambda_tv)+\sigma h_t^2A_h\Lambda_tv+A_hv=f\ \ \text{in}\ \ H_h\ \ \text{on}\ \ \omega_{h_t},
\label{3level sch1}\\[1mm]
 B_h(\rho\delta_tv^0)+\sigma h_t^2A_h\delta_tv^0+\half h_tA_hv^0=u_1+\half h_tf^0\ \ \text{in}\ \ H_h,
\label{3level sch2}
\end{gather}
where $v$: $\overline\omega_{h_t}\to H_h$ is the sought function and the functions $v^0,u_1\in H_h$ and $f$: $\{t_m\}_{m=0}^{M-1}\to H_h$ are given; we omit their dependence on $h$ for brevity.
Also $\sigma$ can depend on $\*h:=(h,h_t)$.

\par Note that the form of equation \eqref{3level sch2} for $v^1$ goes back to \cite{Z94} and is essential for several purposes (in particular, it seems most natural in the non-smooth case).
It can be rewritten in the form
{similar} to \eqref{3level sch1}:
\[
\tfrac{1}{0.5h_t}\big[B_h(\rho\delta_tv^0)+\sigma h_t^2A_h\delta_tv^0-u_1\big]+A_hv^0=f^0.
\]
{Note} that clearly $\tfrac{1}{0.5h_t}(\delta_tu^0-(\partial_tu)_{t=0})\approx(\partial_t^2u)_{t=0}$ for any function $u\in C^2[0,T]$.

\par Recall that linear algebraic systems in $H_h$ of the form
\begin{gather}
 B_h(\rho w^m)+\sigma h_t^2A_hw^m=b^m
\label{laeq}
\end{gather}
has to be solved at time levels $t_m$ to find the solution $v^{m+1}$ for all $0\leq m\leq M-1$.
One of the possible ways is to find directly $w^0=\delta_tv^0$ from \eqref{3level sch2}
and set $v^1=v^0+h_tw^0$,  {then  find}
$w=\Lambda_tv$ from \eqref{3level sch1} and set $\hat{v}=2v-\check{v}+h_t^2w$.
We can define the ``diagonal'' operator $D_\rho w:=\rho w$ in $H_h$, then $B_hD_\rho+\sigma h_t^2A_h$ is the operator in the problem \eqref{laeq}.

\par In \cite{BTT18,STT19}, for $\sigma\neq 0$, a special trick was applied (here we do not dwell on its motivation).
The auxiliary function $b$ is introduced by the recurrent relation
\begin{gather}
 \hat{b}=\big(2-\tfrac{1}{\sigma}\big)b-\check{b}-\tfrac{\rho}{\sigma^2h_t^2}v-\tfrac{1}{\sigma}\tilde{f}\ \ \text{in}\ \ H_h\ \ \text{on}\ \ \omega_{h_t},
\label{3termrrb}
\end{gather}
and it is suggested to solve the equation
\begin{gather}
 -A_h\hat{v}-\tfrac{1}{\sigma h_t^2}B_h(\rho\hat{v})=B_h\hat{b}\ \ \text{in}\ \ H_h\ \ \text{on}\ \ \omega_{h_t}
\label{tsynkov2}
\end{gather}
to find $\hat{v}$ (here the notation is slightly changed).
Rewriting relation \eqref{3termrrb} as
\[
 h_t^2\Lambda_tb=-\tfrac{1}{\sigma}b-\tfrac{\rho}{\sigma^2h_t^2}v-\tfrac{1}{\sigma}\tilde{f}
\]
and applying $-\sigma B_h$ to it, we get
\[
 -\sigma h_t^2\Lambda_t B_hb=B_hb+\tfrac{1}{\sigma h_t^2}B_h(\rho v)+B_h\tilde{f}.
\]
Applying to it (on both sides) equation \eqref{tsynkov2} from right to left (we use it also for $t_0=0$ together with $-A_hv^0-\tfrac{1}{\sigma h_t^2}B_h(\rho v^0)=B_hb^0$ for the definitions of $b^1$ and $b^0$),
we obtain
\[
 B_h(\rho\Lambda_tv)+\sigma h_t^2A_h\Lambda_tv=-A_hv-\tfrac{1}{\sigma h_t^2}B_h(\rho v)+\tfrac{1}{\sigma h_t^2}B_h(\rho v)+B_h\tilde{f}
 =-A_hv+B_h\tilde{f}
\]
on $\omega_{h_t}$.
This is nothing more than equation \eqref{3level sch1} with $f=B_h\tilde{f}$.

\par For $\sigma<\frac14$, we also assume that $A_h$ and $B_h$ are related by the following inequality
\begin{gather}
\|w\|_{A_h}\leq\alpha_h\|w\|_{B_h}\ \ \ \forall w\in H_h \ \ \Leftrightarrow\ \ A_h\leq\alpha_h^2 B_h.
\label{ahbh}
\end{gather}
Clearly the minimal value of $\alpha_h^2$ is the maximal eigenvalue of the generalized eigenvalue problem
\begin{gather}
A_he=\lambda B_he,\ \ e\in H_h,\ \ e\neq 0.
\label{geneigval}
\end{gather}
\par For method \eqref{3level sch1}-\eqref{3level sch2}, we present a theorem on uniform in time stability (unconditional for
$\sigma\geq \tfrac14$ or conditional for $\sigma<\tfrac14$) in the mesh strong (standard) and weak energy norms with respect to the initial data $v^0$ and $u_1$ and the free term $f$.
Let $\|y\|_{L_{h_t}^1(H_h)}:=\tfrac14 h_t\|y^0\|_h+I_{h_t}^{M-1}\|y\|_h$.
\begin{theorem}
\label{theo:1}
Let the operators $A_h$ and $B_h$ commute, i.e. $A_hB_h=B_hA_h$.
Let either $\sigma\geq\frac14$ and $\ve_0=1$, or
\begin{gather}
 \sigma<\tfrac14,\ \
 (\tfrac14-\sigma)h_t^2\alpha_h^2\leq (1-\ve_0^2)\urho\ \ \text{for some}\ \ 0<\ve_0<1.
\label{stabcond}
\end{gather}
For the solution to method \eqref{3level sch1}-\eqref{3level sch2}, the following bounds hold:
\par (1) in the strong energy norm
\begin{gather}
\max_{1\leq m\leq M}
\Big[\|\sqrt{\rho}\bar{\delta}_tv^m\|_h^2+(\sigma-\tfrac14)h_t^2\|\bar{\delta}_tv^m\|_{B_h^{-1}A_h}^2
+\|\bar{s}_tv^m\|_{B_h^{-1}A_h}^2\Big]^{1/2}
\nonumber\\[1mm]
\leq\Big(\|v^0\|_{B_h^{-1}A_h}^2+\ve_0^{-2}\big\|\tfrac{1}{\sqrt{\rho}}B_h^{-1}u_1\big\|_h^2\Big)^{1/2}
       +2\ve_0^{-1}\big\|\tfrac{1}{\sqrt{\rho}}B_h^{-1}f\big\|_{L_{h_t}^1(H_h)},
\label{energy est1}
\end{gather}
where the $f$-term can be replaced with
$2I_{h_t}^{M-1}\|(A_hB_h)^{-1/2}\bar{\delta}_tf\|_h+3\max\limits_{0\leq m\leq M-1}\|(A_hB_h)^{-1/2}f^m\|_h$;

\par (2) in the weak energy norm
\begin{gather}
\max_{0\leq m\leq M}
\max\Big\{\Big[\|\sqrt{\rho}v^m\|_h^2+(\sigma-\tfrac14)h_t^2\|v^m\|_{B_h^{-1}A_h}^2\Big]^{1/2},\,\|I_{h_t}^m\bar{s}_tv\|_{B_h^{-1}A_h}\Big\}
\nonumber\\[1mm]
 \leq\Big[\|\sqrt{\rho}v^0\|_h^2+(\sigma-\tfrac14)h_t^2\|v^0\|_{B_h^{-1}A_h}^2\Big]^{1/2}
 +2\|(A_hB_h)^{-1/2}u_1\|_h
 +2\|(A_hB_h)^{-1/2}f\|_{L_{h_t}^1(H_h)},
\label{energy est2}
\end{gather}
where, for $f=\delta_tg$, one can replace the $f$-term with $2\ve_0^{-1}I_{h_t}^{M}\big\|\tfrac{1}{\sqrt{\rho}}B_h^{-1}\big(g-s_tg^0\big)\big\|_h$.
\end{theorem}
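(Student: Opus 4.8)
The plan is to run the standard discrete energy method for symmetric three-level schemes, organised around the single self-adjoint positive operator $\mathcal{A}:=B_h^{-1}A_h$. Since $A_h=A_h^*>0$, $B_h=B_h^*>0$ and $A_hB_h=B_hA_h$, the operator $\mathcal{A}$ is self-adjoint and positive with respect to $(\cdot,\cdot)_h$, and \eqref{ahbh} is equivalent to $\mathcal{A}\leq\alpha_h^2 I$. Applying $B_h^{-1}$ to \eqref{3level sch1} turns it into $D_\rho\Lambda_tv+\sigma h_t^2\mathcal{A}\Lambda_tv+\mathcal{A}v=B_h^{-1}f$ on $\omega_{h_t}$, where $D_\rho w=\rho w$. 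First I would take the inner product of this identity with the symmetric test function $2h_t\mathring{\delta}_tv=\hat v-\check v$ and use the two elementary telescoping identities, valid for any $C=C^*$,
\[
 (C\Lambda_tv,\hat v-\check v)_h=\|\bar{\delta}_t\hat v\|_C^2-\|\bar{\delta}_tv\|_C^2,\quad
 (\mathcal{A}v,\hat v-\check v)_h=\tfrac12\big(\|\hat v\|_{\mathcal{A}}^2-\|\check v\|_{\mathcal{A}}^2\big)-\tfrac{h_t^2}{2}\big(\|\bar{\delta}_t\hat v\|_{\mathcal{A}}^2-\|\bar{\delta}_tv\|_{\mathcal{A}}^2\big),
\]
together with the parallelogram identity $\tfrac12(\|v\|_{\mathcal{A}}^2+\|\check v\|_{\mathcal{A}}^2)=\|\bar s_tv\|_{\mathcal{A}}^2+\tfrac{h_t^2}{4}\|\bar{\delta}_tv\|_{\mathcal{A}}^2$. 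These combine into the one-step energy identity $E^{m+1}-E^m=(B_h^{-1}f^m,\hat v^m-\check v^m)_h$ with
\[
 E^{m}=\|\sqrt{\rho}\,\bar{\delta}_tv^{m}\|_h^2+\big(\sigma-\tfrac14\big)h_t^2\|\bar{\delta}_tv^{m}\|_{\mathcal{A}}^2+\|\bar s_tv^{m}\|_{\mathcal{A}}^2,
\]
which is exactly the squared quantity under the maximum in \eqref{energy est1}.

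Next I would establish coercivity of $E^m$. For $\sigma\geq\tfrac14$ all three terms are nonnegative, so $E^m\geq0$ and $\ve_0=1$ works. For $\sigma<\tfrac14$ I would use $\mathcal{A}\leq\alpha_h^2 I$ and $\rho\geq\urho$ to write
\[
 \big(\sigma-\tfrac14\big)h_t^2\|\bar{\delta}_tv\|_{\mathcal{A}}^2\geq-\tfrac{(\frac14-\sigma)h_t^2\alpha_h^2}{\urho}\,\|\sqrt{\rho}\,\bar{\delta}_tv\|_h^2\geq-(1-\ve_0^2)\|\sqrt{\rho}\,\bar{\delta}_tv\|_h^2,
\]
the last step being precisely the stability condition \eqref{stabcond}. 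Hence $E^m\geq\ve_0^2\|\sqrt{\rho}\,\bar{\delta}_tv^m\|_h^2+\|\bar s_tv^m\|_{\mathcal{A}}^2\geq0$, so $E^m$ controls the strong energy and $\|\sqrt{\rho}\,\bar{\delta}_tv^m\|_h\leq\ve_0^{-1}(E^m)^{1/2}$.

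Then I would sum the one-step identity and close the estimate. Summing $E^{m+1}-E^m$ from $m=1$ to $m_0-1$ reduces matters to bounding $E^1$ and the accumulated right-hand side. For $E^1$ I would test the special first-step equation \eqref{3level sch2} (after multiplying by $B_h^{-1}$) with $2\delta_tv^0=2\bar{\delta}_tv^1$; the form of \eqref{3level sch2} is tailored so that this yields $E^1\leq\|v^0\|_{\mathcal{A}}^2+\ve_0^{-2}\|\tfrac1{\sqrt\rho}B_h^{-1}u_1\|_h^2$ plus a first-step free-term contribution which, carried into the quadratic inequality below, supplies exactly the $\tfrac14h_t\|\cdot^0\|_h$ weight in $\|\cdot\|_{L_{h_t}^1(H_h)}$. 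For the running sum I would write $\hat v^m-\check v^m=h_t(\bar{\delta}_tv^{m+1}+\bar{\delta}_tv^m)$, apply Cauchy--Schwarz as $(B_h^{-1}f^m,\sqrt\rho\,w)_h\leq\|\tfrac1{\sqrt\rho}B_h^{-1}f^m\|_h\|\sqrt\rho\,w\|_h$, and bound $\|\sqrt\rho\,\bar{\delta}_tv\|_h$ by $\ve_0^{-1}\max_m(E^m)^{1/2}$. Denoting $\mathcal{E}=\max_{1\leq m\leq M}(E^m)^{1/2}$ and taking $m_0$ at the maximiser gives the quadratic inequality $\mathcal{E}^2\leq\|v^0\|_{\mathcal{A}}^2+\ve_0^{-2}\|\tfrac1{\sqrt\rho}B_h^{-1}u_1\|_h^2+2\ve_0^{-1}\mathcal{E}\,\|\tfrac1{\sqrt\rho}B_h^{-1}f\|_{L_{h_t}^1(H_h)}$, whose solution is exactly \eqref{energy est1}. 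The \emph{main obstacle} is precisely this last step: obtaining a clean \emph{time-uniform} linear bound (with the correct $\ve_0^{-1}$ weights and the $L_{h_t}^1$ norm) rather than a Gronwall-type exponential factor; the device is to test with the symmetric difference, reduce to the maximum of $E^m$, and resolve the resulting quadratic inequality. The alternative $f$-bound follows by summation by parts in time on the right-hand side, moving $\mathring{\delta}_t$ from $v$ onto $f$, which produces the $(A_hB_h)^{-1/2}\bar{\delta}_tf$ and $(A_hB_h)^{-1/2}f^m$ terms.

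Finally, for the weak estimate (2) I would apply the same machinery to the time-primitive. Setting $\tilde v^m=I_{h_t}^m\bar s_tv$ and summing \eqref{3level sch1} in time, the equation for $v$ becomes a three-level relation whose energy is the squared quantity under the maximum in \eqref{energy est2}, with the roles of $\bar{\delta}_tv,\bar s_tv$ now played by $v,\tilde v$; running the identity--coercivity--summation argument above for this summed problem yields \eqref{energy est2}. The special case $f=\delta_tg$ is handled by one further summation by parts in time, which turns the free-term contribution into $2\ve_0^{-1}I_{h_t}^{M}\|\tfrac1{\sqrt\rho}B_h^{-1}(g-s_tg^0)\|_h$ as claimed.
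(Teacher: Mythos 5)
Your argument is correct, and its opening move coincides exactly with the paper's: apply $B_h^{-1}$ to \eqref{3level sch1}--\eqref{3level sch2} and observe that, under the commutation hypothesis, $\mathcal{A}=B_h^{-1}A_h$ is self-adjoint and positive in $H_h$, so that \eqref{ahbh} becomes $\mathcal{A}\leq\alpha_h^2I$ (the paper's \eqref{ahbh -1}) and the scheme becomes one with ``coefficient operator'' $D_\rho$ and elliptic operator $\mathcal{A}$. Where you diverge is what happens next: the paper stops there and invokes the already-proved constant-$\rho$ stability theorem of \cite{ZK20} (see also \cite{ZCh20}) as a black box, with $D_\rho$, $B_h^{-1}A_h$, $B_h^{-1}f$, $B_h^{-1}u_1$ in the roles of $B_h$, $A_h$, $f$, $u_1$, whereas you re-derive the content of that cited theorem from scratch by the energy method. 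Your derivation is consistent with the paper: your telescoping identity $E^{m+1}-E^m=(B_h^{-1}f^m,\hat v^m-\check v^m)_h$, together with the first-step computation from \eqref{3level sch2}, is precisely the discrete energy conservation law \eqref{en cons law} displayed in the paper, and your coercivity step is the paper's inequality following \eqref{ahbh -1}. The trade-off is clear: the paper's route is two lines but opaque, while yours is self-contained and exposes where the hypotheses (commutation, condition \eqref{stabcond}, the special form of the first-step equation) actually enter. The parts you only sketch --- the weak-norm bound via summation of the scheme in time, the alternative $f$-forms via summation by parts, and the identity $\|(B_h^{-1}A_h)^{-1/2}B_h^{-1}w\|_h=\|(A_hB_h)^{-1/2}w\|_h$ that the paper records explicitly to convert the dual norms into the stated $(A_hB_h)^{-1/2}$ form --- are exactly the parts the paper delegates to the citation; they are standard but would need to be written out for a fully self-contained proof.
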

\begin{proof}
We prove this theorem {by applying its non-standard reduction} to
the recently proved results directly suitable only for $\rho(x)\equiv{\rm const}$.
Applying $B_h^{-1}$ to equations \eqref{3level sch1}-\eqref{3level sch2}, we get
\begin{gather}
 (\rho I+\sigma h_t^2B_h^{-1}A_h)\Lambda_tv+B_h^{-1}A_hv=B_h^{-1}f\ \ \text{in}\ \ H_h\ \ \text{on}\ \ \omega_{h_t},
\label{3level sch1 -1}\\[1mm]
(\rho I+\sigma h_t^2B_h^{-1}A_h)\delta_tv^0+\half h_tB_h^{-1}A_hv^0=B_h^{-1}u_1+\half h_tB_h^{-1}f^0\ \ \text{in}\ \ H_h.
\label{3level sch2 -1}
\end{gather}
We have $D_\rho^*=D_\rho>0$.
The property $A_hB_h=B_hA_h$ implies $A_hB_h^{-1}=B_h^{-1}A_h$ and thus $(B_h^{-1}A_h)^*=B_h^{-1}A_h$.
Also the eigenvalue equation in \eqref{geneigval} can be rewritten as $B_h^{-1}A_he=\lambda e$ and therefore $B_h^{-1}A_h>0$;
moreover, inequality \eqref{ahbh} is equivalent to
\begin{gather}
 \|w\|_{B_h^{-1}A_h}\leq\alpha_h\|w\|_h\ \ \forall w\in H_h.
\label{ahbh -1}
\end{gather}
Consequently under the imposed conditions on $\sigma$ and $h_t$ we also have
\begin{gather}
 \ve_0^2\|\sqrt{\rho}w\|_h^2\leq\|\sqrt{\rho}w\|_h^2+(\sigma-\tfrac14)h_t^2\|w\|_{B_h^{-1}A_h}^2\ \ \forall w\in H_h.
\end{gather}
Now one can apply \cite[Theorem 1]{ZK20} (see also \cite{ZCh20}) concerning method \eqref{3level sch1}-\eqref{3level sch2} with $\rho(x)\equiv 1$ to method \eqref{3level sch1 -1}-\eqref{3level sch2 -1}, with
$D_\rho$, $B_h^{-1}A_h$, $B_h^{-1}f$ and $B_h^{-1}u_1$ in the role of $B_h$, $A_h$, $f$ and $u_1$, respectively, and derive the stated bounds.
\par We notice that \textit{the discrete energy conservation law}
\begin{gather}
 \|\sqrt{\rho}\bar{\delta}_tv^m\|_h^2+(\sigma-\tfrac14)h_t^2\|\bar{\delta}_tv^m\|_{B_h^{-1}A_h}^2
+\|\bar{s}_tv^m\|_{B_h^{-1}A_h}^2
 =\big(B_h^{-1}A_hv^0,s_tv^0\big)_h
\nonumber\\[1mm]
 +\big(B_h^{-1}u_1,\delta_tv^0\big)_h
 +\half h_t\big(B_h^{-1}f^0,\delta_tv^0\big)_h+2I_{h_t}^{m-1}(B_h^{-1}f,\mathring{\delta}_tv)_h, 1\leq m \leq M,
\label{en cons law}
\end{gather}
see proof of Theorem 1 in \cite{ZCh20},
{not only}
implies bound \eqref{energy est1} for the method
\eqref{3level sch1 -1}-\eqref{3level sch2 -1}  but itself has the independent interest.
This natural form is obtained, in particular, due to equation \eqref{3level sch2} for $v^1$.

\par Concerning the operators in the second form of the $f$-term in \eqref{energy est1} and the last two terms in \eqref{energy est2}, we also take into account the following transformations
\begin{gather*}
 \|(B_h^{-1}A_h)^{-1/2}B_h^{-1}w\|_h^2
 =\big((B_h^{-1}A_h)^{-1}B_h^{-1}w,B_h^{-1}w\big)_h
 =\big(B_h^{-1}A_h^{-1}w,w\big)_h
\\[1mm]
 =\big((A_hB_h)^{-1}w,w\big)_h
 =\|(A_hB_h)^{-1/2}w\|_h\ \ \forall w\in H_h.
\end{gather*}
This completes the proof.
\end{proof}

\par In practice, stability conditions like \eqref{stabcond} are often obtained by
applying the spectral method in the case $\rho(x)\equiv \textrm{const}$ and then taking in the result $\urho$ as this constant.
We emphasize that Theorem \ref{theo:1} justifies that such an approach is correct in our case.
On the other hand, we emphasize that in the case $\rho(x)\equiv \textrm{const}$ our stability bounds themselves
\eqref{energy est1}-\eqref{energy est2} \textit{differ} from those in \cite{ZK20}.
\par Recall that each of bounds \eqref{energy est1} or \eqref{energy est2} implies existence and uniqueness of the solution to method \eqref{3level sch1}-\eqref{3level sch2} for any given $v^0,u_1\in H_h$ and $f$: $\{t_m\}_{m=0}^{M-1}\to H_h$.
The same applies to finite-difference schemes below.
\par Bound \eqref{energy est2} in the weak energy norm is less standard
 than \eqref{energy est1} but namely it contains simple $H_h$-norm of $v^0$ most
relevant when studying stability with respect to the round-off errors; also  bounds in both norms are essential when proving delicate error estimates \cite{Z94} in dependence with the data smoothness.

\section{\large Construction and properties of compact finite-difference schemes of the 4th order of approximation}
\label{numerovschemes}
\setcounter{equation}{0}
\setcounter{lemma}{0}
\setcounter{theorem}{0}
\setcounter{remark}{0}

Let below $\Omega=(0,X_1)\times\ldots\times(0,X_n)$ and $g$ be general in the boundary condition \eqref{hyperb2ibc} if the opposite is not stated explicitly.
Define the uniform rectangular mesh
\[
 \bar{\omega}_h=\{x_{\*k}=(x_{1k_1},\ldots,x_{nk_n})=(k_1h_1,\ldots,k_nh_n);\, 0\leq k_1\leq N_1,\ldots,0\leq k_n\leq N_n\}
\]
in $\bar{\Omega}$
with the steps $h_1=\frac{X_1}{N_1},\ldots,h_n=\frac{X_n}{N_n}$, $h=(h_1,\ldots,h_n)$ and $\*k=(k_1,\ldots,k_n)$.
Let
\[
\omega_h=\{x_{\*k};\, 1\leq k_1\leq N_1-1,\ldots,1\leq k_n\leq N_n-1\},\ \ \partial\omega_h=\bar{\omega}_h\backslash\omega_h
\]
be the internal part and boundary of $\bar{\omega}_h$.
Define also the meshes $\omega_{\*h}:=\omega_h\times\omega_{h_t}$ in $Q_T$ and $\partial\omega_{\*h}=\partial\omega_h\times\{t_m\}_{m=1}^M$ on $\bar{\Gamma}_T$.

\par We introduce the standard difference approximation to $\partial_l^2w$:
\[
 (\Lambda_lw)_{\*k}=\tfrac{1}{h_l^2}(w_{\*k+\*e_l}-2w_{\*k}+w_{\*k-\*e_l}),\ \ l=1,\ldots,n,
\]
on $\omega_{h}$,
where $w_{\*k}=w(x_{\*k})$ and $\*e_1,\ldots,\*e_n$ is the standard coordinate basis in $\mathbb{R}^n$.
\par Let below $H_h$ be the space of functions defined on $\bar{\omega}_h$ and equal 0 on $\partial\omega_h$, endowed with the inner product $(v,w)_h=h_1\ldots h_n\sum_{x_{\*k}\in\omega_h}v_{\*k}w_{\*k}$.

\par We define the Numerov-type averaging operators and approximation of $f$
\begin{gather*}
 s_N:=I+\onetwelve (h_1^2\Lambda_1+\ldots+h_n^2\Lambda_n),\ \
 s_{N\widehat{j}}:=I+\onetwelve \sum_{1\leq i\leq n,\,i\neq j}h_i^2\Lambda_i,\ \ 1\leq j \leq n,
\\[1mm]
 A_N:=-(a_1^2s_{N\widehat{1}}\Lambda_1+\ldots+a_n^2s_{N\widehat{n}}\Lambda_n),\ \
 f_N:=s_Nf+\onetwelve h_t^2\Lambda_tf,
\end{gather*}
where $I$ is the identity operator;
note that $s_{N\widehat{1}}=I$ for $n=1$.
We also set
\begin{gather}
 u_{1N}:=s_N(\rho u_1) +\onetwelve h_t^2(a_1^2\Lambda_1+\ldots+a_n^2\Lambda_n)u_1,\,\
\label{u1N}\\[1mm]
 f_N^0:=f_{dh_t}^{(0)}+\onetwelve (h_1^2\Lambda_1+\ldots+h_n^2\Lambda_n)f_0,
 \ \ \text{with some}\ \
 f_{dh_t}^{(0)}=f_{d}^{(0)}+\mathcal{O}(h_t^3),
\label{tf0N}
\end{gather}
on $\omega_h$, where $f_{d}^{(0)}:=f_0+\tfrac13 h_t(\partial_tf)_0+\onetwelve h_t^2(\partial_t^2f)_0$
and $y_0:=y|_{t=0}$, similarly to \cite{ZK20}.
Note the non-trivial form of $u_{1N}\approx \rho u_1$, where the first term contains $\rho u_1$, but the second one does not.
Additional details concerning formula \eqref{tf0N} are given in Remark \ref{rem:fdht0} below.
\par The following basic lemma generalizes
\cite[Lemmas 1-2]{ZK20} for $\rho(x)\not\equiv{\rm const}$.
\begin{lemma}
\label{lem:psi}
Let the coefficient $\rho$ and solution $u$ to the IBVP \eqref{hyperb2eq}-\eqref{hyperb2ibc} be sufficiently smooth respectively in $\bar{\Omega}$ and $\bar{Q}_T$.
Then the following formulas hold
\begin{gather}
 s_N(\rho\Lambda_tu)-\onetwelve h_t^2(h_1^2a_1^2\Lambda_1+\ldots+a_n^2h_n^2\Lambda_n)\Lambda_tu-A_Nu-f_N
 =\mathcal{O}(|\*h|^4)\,\
 \text{on}\,\ \omega_{\*h},
\label{approxerrN}\\[1mm]
 s_N(\rho\delta_tu)^0-\tfrac{h_t^2}{12}(h_1^2a_1^2\Lambda_1+\ldots+a_n^2h_n^2\Lambda_n)(\delta_tu)^0
-\tfrac{h_t}{2}A_Nu_0
 -u_{1N}-\tfrac{h_t}{2}f_N^0=\mathcal{O}(|\*h|^4)\
 \text{on}\ \omega_h.
\label{psie0}
\end{gather}
\end{lemma}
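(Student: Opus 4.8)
The plan is to derive both \eqref{approxerrN} and \eqref{psie0} by Taylor expanding every mesh operator about the node in question, replacing each difference operator by its differential symbol plus its leading second-order truncation term, and then using equation \eqref{hyperb2eq} together with its $t$-derivatives (and, for \eqref{psie0}, its restriction to $t=0$) to annihilate all lower-order contributions down to $\mathcal{O}(|\*h|^4)$. The ingredients I would record first are the one-dimensional expansions $\Lambda_lw=\partial_l^2w+\onetwelve h_l^2\partial_l^4w+\mathcal{O}(h_l^4)$ and $\Lambda_tw=\partial_t^2w+\onetwelve h_t^2\partial_t^4w+\mathcal{O}(h_t^4)$ for smooth $w$, the purely spatial Numerov identities $s_{N\widehat{l}}\Lambda_lw=s_N\partial_l^2w+\mathcal{O}(|h|^4)$ (whence $A_Nw=-s_N(a_1^2\partial_1^2+\ldots+a_n^2\partial_n^2)w+\mathcal{O}(|h|^4)$), and the remark that $s_N=I+\onetwelve(h_1^2\Lambda_1+\ldots+h_n^2\Lambda_n)=I+\mathcal{O}(|h|^2)$, so that $s_N$ may be dropped, to within $\mathcal{O}(|\*h|^4)$, inside any term already carrying a factor $h_t^2$. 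Since none of these involve $\rho$, they are taken over directly from \cite{ZK20}.

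The decisive and genuinely new step for $\rho\not\equiv\mathrm{const}$ is the treatment of the product $s_N(\rho\Lambda_tu)$. Expanding $\rho\Lambda_tu=\rho\partial_t^2u+\onetwelve h_t^2\rho\partial_t^4u+\mathcal{O}(h_t^4)$ and applying $s_N$, the leading piece is $s_N(\rho\partial_t^2u)$, and here I would use \eqref{hyperb2eq} \emph{pointwise}, $\rho\partial_t^2u=(a_1^2\partial_1^2+\ldots+a_n^2\partial_n^2)u+f$, so that the variable coefficient disappears \emph{before} the spatial averaging acts: by the Numerov identity, $s_N(\rho\partial_t^2u)=-A_Nu+s_Nf+\mathcal{O}(|h|^4)$. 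For the $\mathcal{O}(h_t^2)$ piece I would replace $s_N$ by $I$ and differentiate \eqref{hyperb2eq} twice in $t$ (legitimate, since $\rho$ and the $a_l$ do not depend on $t$) to get $\rho\partial_t^4u=(a_1^2\partial_1^2+\ldots+a_n^2\partial_n^2)\partial_t^2u+\partial_t^2f$. Collecting yields $s_N(\rho\Lambda_tu)=-A_Nu+s_Nf+\onetwelve h_t^2\big[(a_1^2\partial_1^2+\ldots+a_n^2\partial_n^2)\partial_t^2u+\partial_t^2f\big]+\mathcal{O}(|\*h|^4)$. Substituting this, together with the identity $f_N=s_Nf+\onetwelve h_t^2\partial_t^2f+\mathcal{O}(|\*h|^4)$ (which holds since $\Lambda_tf=\partial_t^2f+\mathcal{O}(h_t^2)$), into the left-hand side of \eqref{approxerrN}, the $A_Nu$-terms cancel, the $\onetwelve h_t^2(a_1^2\partial_1^2+\ldots+a_n^2\partial_n^2)\partial_t^2u$ contribution is absorbed by the mixed space--time correction term, and the surviving $\mathcal{O}(|h|^2)$ spatial remainder vanishes because it equals $\onetwelve(h_1^2\partial_1^2+\ldots+h_n^2\partial_n^2)$ applied to the identically zero residual $\rho\partial_t^2u-(a_1^2\partial_1^2+\ldots+a_n^2\partial_n^2)u-f$; this leaves $\mathcal{O}(|\*h|^4)$ and proves \eqref{approxerrN}.

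For \eqref{psie0} I would proceed in the same spirit but must expand the forward difference to third order, $(\delta_tu)^0=\partial_tu|_{t=0}+\half h_t\partial_t^2u|_{t=0}+\tfrac16 h_t^2\partial_t^3u|_{t=0}+\tfrac{1}{24}h_t^3\partial_t^4u|_{t=0}+\mathcal{O}(h_t^4)$, and then invoke the initial data $\partial_tu|_{t=0}=u_1$, equation \eqref{hyperb2eq} at $t=0$ in the form $\rho\partial_t^2u|_{t=0}=(a_1^2\partial_1^2+\ldots+a_n^2\partial_n^2)u_0+f_0$, and its once-$t$-differentiated form $\rho\partial_t^3u|_{t=0}=(a_1^2\partial_1^2+\ldots+a_n^2\partial_n^2)u_1+(\partial_tf)_0$ (again $s_N$ may be dropped inside the $h_t^2$- and $h_t^3$-terms). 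This is exactly where the nonobvious structure of $u_{1N}$ in \eqref{u1N}---the $\rho u_1$ placed under $s_N$ against the $u_1$ (without $\rho$) in the $\onetwelve h_t^2(a_1^2\Lambda_1+\ldots+a_n^2\Lambda_n)$-term---is forced, and where the definition \eqref{tf0N} of $f_N^0$, exploiting the $\mathcal{O}(h_t^3)$ freedom in $f_{dh_t}^{(0)}$, is tuned to absorb the half-integer time expansions; after matching the $s_N(\rho u_1)$, $\half h_tA_Nu_0$ and $\half h_tf_N^0$ pieces the residual is again $\mathcal{O}(|\*h|^4)$.

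The step I expect to be the main obstacle is precisely the non-commutation of the variable coefficient $\rho(x)$ with the spatial Numerov operator $s_N$ in $s_N(\rho\,\cdot\,)$: one cannot pull $\rho$ through $s_N$, and a naive expansion of $s_N(\rho\partial_t^2u)$ throws up uncancelled first- and second-order \emph{spatial} derivatives of $\rho$. The device that disposes of this---the averaging of the wave equation announced in the Introduction---is to use \eqref{hyperb2eq} to convert $\rho\partial_t^2u$ (and likewise $\rho\partial_t^4u$ and $\rho\partial_t^3u|_{t=0}$) into $\rho$-free spatial expressions \emph{before} $s_N$ is applied, which reduces the whole computation to the constant-coefficient Numerov bookkeeping of \cite{ZK20}; any residual dependence on $\rho$ then survives only inside terms already premultiplied by $h_t^2$, where $s_N$ may be replaced by the identity modulo $\mathcal{O}(|\*h|^4)$.
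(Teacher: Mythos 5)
Your proposal is correct, but it follows a genuinely different route from the paper's proof --- namely the one the paper itself sets aside with the remark that ``the more standard Numerov-type technique could be also used.'' The paper instead introduces the finite-element averages $q_k$ (and $q_t$, plus a one-sided $q_t(\cdot)^0$ over $(0,h_t)$), exploits the \emph{exact} identities $q_k\partial_k^2w=\Lambda_kw$ and $\tfrac{h_t}{2}(q_t\partial_tu)^0=(\delta_tu)^0-u_1$, applies $\bar q q_t$ (resp.\ $\tfrac{h_t}{2}\bar q q_t(\cdot)^0$) to the whole equation \eqref{hyperb2eq} to get the exact averaged identities \eqref{avereq} and \eqref{avereq_0}, and only afterwards Taylor-expands the averages into the Numerov operators $s_N$, $s_{N\widehat{l}}$. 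In that setup the troublesome product $\rho\partial_t^2u$ is never split: $\bar q(\rho\Lambda_tu)=s_N(\rho\Lambda_tu)+O(|h|^4)$ is read off directly, so the non-commutation of $\rho$ with $s_N$ that you correctly flag as the crux never has to be confronted; your device of substituting the PDE (and its $t$-derivatives, and its trace at $t=0$) to eliminate $\rho$ \emph{before} $s_N$ acts is the Numerov-style counterpart of the same idea and, as I checked, your bookkeeping does close: the $A_Nu$-, $s_Nf$-, $h_t^2$- and $h_t^3$-terms all cancel exactly as you describe, including the term-by-term match that forces the asymmetric structure of $u_{1N}$ in \eqref{u1N} and the $f_{dh_t}^{(0)}$ in \eqref{tf0N}. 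The trade-off is worth noting: the paper's averaging route delivers the right-hand sides of \eqref{avereq} and \eqref{avereq_0} in integral form, which is what tells you how to discretize a non-smooth $f$ and $u_1$ (this is used for the $\delta$-shaped source in Section \ref{numerexperiments}), whereas your route is more elementary, requires no new averaging machinery, and makes the cancellation mechanism fully explicit --- at the price of assuming enough smoothness to Taylor-expand everything. One small caution: carry the $\tfrac{h_t^3}{24}\rho(\partial_t^4u)_0$ term of $(\delta_tu)^0$ through to the end --- it is $O(h_t^3)$, not $O(|\*h|^4)$, and is killed only by the matching $-\tfrac{h_t^3}{24}\sum_la_l^2\partial_l^2(\partial_t^2u)_0$ coming from the $\tfrac{h_t^2}{12}\sum_la_l^2\Lambda_l(\delta_tu)^0$ correction together with the $\tfrac{h_t^3}{24}(\partial_t^2f)_0$ hidden in $\tfrac{h_t}{2}f_{dh_t}^{(0)}$; your outline asserts this cancellation but does not display it.
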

\begin{proof}
We apply a new technique based on averaging of equation \eqref{hyperb2eq} related to the polylinear finite elements like in \cite{ZK20};
the more standard Numerov-type technique could be also used.
An advantage of the  averaging technique is that approximations of $f$ and $u_1$ in the non-smooth case (important in practice) become clear from the right-hand sides of formulas \eqref{avereq} and \eqref{avereq_0} below (we use this in Section \ref{numerexperiments}) but remain obscure in the frame of the Numerov-type technique.
\par 1. We define the well-known average in the variable $x_k$ related to the linear finite elements
\[
 (q_kw)(x_k)=\tfrac{1}{h_k}\int_{-h_k}^{h_k}w(x_k+\xi)\big(1-\tfrac{|\xi|}{h_k}\big)\,d\xi.
\]
For a function $w(x_k)$ smooth on $[0,X_k]$, the following relations hold
\begin{gather}
 q_k\partial_k^2w=\Lambda_kw,\ \
\label{qklambdak}\\[1mm]
 q_kw=w+q_k\rho_{k2}(\partial_k^2w),
\label{qkw 2}\\[1mm]
 q_kw=w+\onetwelve h_k^2\partial_k^2w+q_k\rho_{k4}(\partial_k^4w)=w+\onetwelve h_k^2\Lambda_kw+\tilde{\rho}_{k4}(\partial_k^4w),
\label{qkw 3}\\[1mm]
 |q_k\rho_{ks}(\partial_k^sw)|\leq c_sh_k^s\|\partial_k^sw\|_{C(I_{kl})},\, s=2,4,\
 |\tilde{\rho}_{k4}(\partial_k^4w)|\leq \tilde{c}_4h_k^4\|\partial_k^4w\|_{C(I_{kl})}
\label{resid_bound}
\end{gather}
at the nodes $x_k=x_{kl}$, $1\leq l\leq N_k-1$, with the constants $c_s$ and $\tilde{c}_4$ independent on the mesh and $I_{kl}:=[x_{k(l-1)},x_{k(l+1)}]$.
Formula \eqref{qklambdak} is well-known and is checked by integrating by parts.
Other relations hold due to Taylor's formula at $x_k=x_{kl}$ with the residual in the integral form
\begin{gather}
 \rho_{ks}(w)(x_k)=\frac{1}{(s-1)!}\int_{x_{kl}}^{x_k}w(\xi)(x_k-\xi)^{s-1}\,d\xi,
\label{taylor_residual}
\end{gather}
for $s=2,4$,
together with the elementary formula
\[
 \tfrac{1}{h_k}\int_{-h_k}^{h_k}\half \xi^2\big(1-\tfrac{|\xi|}{h_k}\big)\,d\xi=\onetwelve h_k^2.
\]
The respective formulas hold for the averaging operator $q_t$ in the variable $t=x_{n+1}$ as well since one can set $X_{n+1}=T$ and $h_{n+1}=h_t$.

\par We apply the operator $\bar{q}q_t$ with $\bar{q}:=q_1\ldots q_n$ to the wave equation
\eqref{hyperb2eq} at the nodes of $\omega_{\*h}$, use formula \eqref{qklambdak} and get:
\begin{gather}
 \bar{q}(\rho\Lambda_t u)-(a_1^2\bar{q}_{\,\widehat{1}}\Lambda_1u+\ldots+a_n^2\bar{q}_{\,\widehat{n}}\Lambda_nu)
 =\bar{q}q_tf,\ \ \text{with}\ \
 \bar{q}_{\,\widehat{i}}:=\prod_{1\leq k\leq n,\, k\neq i}q_k;
\label{avereq}
\end{gather}
here $\bar{q}_{\,\widehat{1}}=I$ for $n=1$.
The above expansions for $q_1,\ldots,q_n,q_{n+1}=q_t$ lead to the formula
\begin{gather*}
 \rho\Lambda_tu+\sum_{i=1}^n\onetwelve h_i^2\Lambda_i(\rho\Lambda_tu)
 -\sum_{i=1}^n a_i^2\Big[\Lambda_iu+\Big(\sum_{1\leq j\leq n,\, j\neq i}\onetwelve h_j^2\Lambda_j\Big)\Lambda_iu
 +\onetwelve h_t^2\Lambda_t\Lambda_i u\Big]
\\[1mm]
 =f+\onetwelve \sum_{i=1}^n h_i^2\Lambda_if
 +\onetwelve h_t^2\Lambda_tf+O(|\*h|^4),
\end{gather*}
and thus, using the above defined operators $s_N$ and $A_N$ as well as $f_N$, to formula \eqref{approxerrN} as well.

\par 2. In addition, we define the one-sided average in $t$ over $(0,h_t)$:
\begin{gather}
 q_ty^0=\tfrac{2}{h_t}\int_0^{h_t}y(t)\big(1-\tfrac{t}{h_t}\big)\,dt.
\label{qty0}
\end{gather}
We apply $\tfrac{h_t}{2}\bar{q}q_t(\cdot)^0$ to the wave equation \eqref{hyperb2eq} and, since
$\tfrac{h_t}{2}(q_t\partial_tu)^0=(\delta_tu)^0-(\partial_tu)_0$,  obtain
\begin{gather}
 \bar{q}(\rho\delta_tu)^0
 -\tfrac{h_t}{2}(a_1^2\bar{q}_{\,\widehat{1}}\Lambda_1u+\ldots+a_n^2\bar{q}_{\,\widehat{n}}\Lambda_nu)q_tu^0
 =\bar{q}(\rho u_1)+\tfrac{h_t}{2}\bar{q}q_tf^0.
\label{avereq_0}
\end{gather}
Using Taylor's formula at $t=0$ and calculating the arising integrals in $t$ over $(0,h_t)$, we get
\begin{gather}
 \tfrac{h_t}{2}q_tf^0=\tfrac{h_t}{2}f_0+\tfrac{h_t^2}{6}(\partial_tf)_0+\tfrac{h_t^3}{24}(\partial_t^2f)_0+O(h_t^4)
 =\tfrac{h_t}{2}f_d^{(0)}+O(h_t^4),
\label{expan_f}
\end{gather}
with $f_d^{(0)}$ defined above.
Here we omit the integral representations for $O(h_t^4)$-terms for brevity.
Similarly to the previous Item 1 and due to expansion \eqref{expan_f}, we find
\begin{gather}
 \bar{q}(\rho\delta_tu)^0=s_N(\rho\delta_tu)^0+O(|h|^4),\ \
 \bar{q}(\rho u_1)=s_N(\rho u_1)+O(|h|^4),
\label{barq_prop}\\[1mm]
 \tfrac{h_t}{2}q_t\bar{q}f^0=\tfrac{h_t}{2}f_d^{(0)}+\tfrac{1}{12}h_i^2\Lambda_if_0+O(|\*h|^4).
\label{aver_prop}
\end{gather}
\par Also due to Taylor's formula in $t$ at $t=0$ one can write down
\[
 u(\cdot,t)=u_0+tu_1+\tfrac{t^2}{h_t}((\delta_tu)^0-u_1)+O(t^3).
\]
Thus similarly
first to \eqref{expan_f} and second to
\eqref{barq_prop} as well as according to formula \eqref{qkw 2} and the first bound \eqref{resid_bound} we obtain
\begin{gather}
 \tfrac{h_t}{2}a_k^2\bar{q}_{\,\widehat{k}}\Lambda_kq_tu^0
 =a_k^2\big[\tfrac{h_t}{2}\bar{q}_{\,\widehat{k}}\Lambda_ku_0+\tfrac{h_t^2}{6}\bar{q}_{\,\widehat{k}}\Lambda_ku_1
 +\tfrac{h_t^2}{12}\bar{q}_{\,\widehat{i}}\Lambda_k((\delta_tu)^0-u_1)\big]+O(h_t^4)
\nonumber\\[1mm]
 =\tfrac{h_t}{2}a_k^2s_{N\widehat{k}}\Lambda_ku_0
 +\tfrac{h_t^2}{12}a_k^2\Lambda_ku_1
 +\tfrac{h_t^2}{12}a_k^2s_{N\widehat{k}}\Lambda_k(\delta_tu)^0+O(|\*h|^4),\ \ 1\leq k\leq n.
\label{aver_equat 0}
\end{gather}
We insert all the derived expansions \eqref{expan_f}-\eqref{aver_equat 0} into formula \eqref{avereq_0}, rearrange the summands and
obtain formula \eqref{psie0} with $u_{1N}$ and $f_N^0$ defined above.
\end{proof}

\begin{remark}
\label{rem:fdht0}
Let $0<h_t\leq\bar{h}_t\leq T$.
If $f$ is sufficiently smooth in $t$ in $\bar{Q}_{\bar{h}_t}$ (or $\bar{\Omega}\times [-\bar{h}_t,\bar{h}_t]$), then $f_{dh_t}^{(0)}=f_{d}^{(0)}+\mathcal{O}(h_t^3)$ (see \eqref{tf0N}) for the following three- and two-level approximations
\begin{gather}
{f}_{dh_t}^{(0)}=\tfrac{7}{12}f^0+\half f^1-\onetwelve f^2,\ \
{f}_{dh_t}^{(0)}=\tfrac13f^0+\tfrac23f^{1/2}\ \ \text{with}\ \  f^{1/2}:=f|_{t=h_t/2}
\label{ftd02}
\end{gather}
(or
$f_{dh_t}^{(0)}=f^0+\tfrac13 h_t\mathring{\delta}_tf^0+\onetwelve h_t^2\Lambda_tf^0
 =-\onetwelve f^{-1}+\tfrac56f^0+\tfrac14 f^1$ with $f^{-1}:=f|_{t=-h_t}$).
These formulas are easily checked using Taylor's formula at $t=0$.
\end{remark}

\par In our construction of compact schemes for the IBVP \eqref{hyperb2eq}-\eqref{hyperb2ibc}, in general we will follow \cite{ZK20}.
Preliminarily we consider the scheme of the form
\begin{gather}
 s_N(\rho\Lambda_tv)-\onetwelve h_t^2(a_1^2\Lambda_1+\ldots+a_n^2\Lambda_n)\Lambda_tv
 +A_Nv =f_N\ \ \text{on}\ \ \omega_{\*h},
\label{num0eq}\\[1mm]
  v|_{\partial\omega_{\*h}}=g,\ \
  s_N(\rho\delta_tv^0)-\onetwelve h_t^2(a_1^2\Lambda_1+\ldots+a_n^2\Lambda_n)v^0
  +\half h_tA_Nv^0
  =u_{1N}+\half h_tf_N^0\ \ \text{on}\ \ \omega_h.
\label{num0ic}
\end{gather}
On the left in formulas \eqref{approxerrN}-\eqref{psie0} in Lemma \ref{lem:psi}, the approximation errors of the equations for this scheme stand, and thus these formulas mean here that the scheme has the approximation order $\mathcal{O}(|\*h|^4)$.
For $n=1$, the scheme takes the simplest form
\begin{gather}
 s_N(\rho\Lambda_tv)-\onetwelve h_t^2a_1^2\Lambda_1\Lambda_tv-a_1^2\Lambda_1v=f_N\ \ \text{on}\ \ \omega_{\*h},
\label{num0eq 1d}\\[1mm]
  v|_{\partial\omega_{\*h}}=g,\ \
  s_N(\rho\delta_tv^0)-\onetwelve h_t^2a_1^2\Lambda_1\delta_tv^0-\half h_ta_1^2\Lambda_1v^0=u_{1N}+\half h_tf_N^0\ \ \text{on}\ \ \omega_h,
\label{num0ic 1d}
\end{gather}
that is a particular case (for $g=0$) of the general method \eqref{3level sch1}-\eqref{3level sch2} for $B_h=s_N$, $A_h=-a_1^2\Lambda_1$ and $\sigma=\onetwelve$.

\par But for $n\geq 2$ scheme \eqref{num0eq}-\eqref{num0ic} is no more of type \eqref{3level sch1}-\eqref{3level sch2}.
Therefore we first replace it with the following scheme
\begin{gather}
 s_N(\rho\Lambda_tv)+\onetwelve h_t^2A_N\Lambda_tv+A_Nv=f_N\ \ \text{on}\ \ \omega_{\*h},
\label{num1eq}\\[1mm]
  v|_{\partial\omega_{\*h}}=g,\ \
  s_N(\rho\delta_tv^0)+\onetwelve h_t^2A_N\delta_tv^0+\half h_tA_Nv^0=u_{1N}+\half h_tf_N^0\ \ \text{on}\ \ \omega_h,
\label{num1ic}
\end{gather}
that corresponds to the case $B_h=s_N$, $A_h=A_N$ and $\sigma=\onetwelve$.
Since
\[
 A_N+a_1^2\Lambda_1+\ldots+a_n^2\Lambda_n=a_1^2(I-s_{N\widehat{1}})\Lambda_1+\ldots+a_n^2(I-s_{N\widehat{n}})\Lambda_n,
\]
the approximation error of this scheme is also of the order $\mathcal{O}(|\*h|^4)$.

\par For $n=2$, one can easily generalize this scheme by the extension
\begin{gather}
 s_N=I+\tfrac{1}{12}h_1^2\Lambda_1+\tfrac{1}{12}h_2^2\Lambda_2 \mapsto s_{N\beta}:=s_N+\beta\tfrac{h_1^2}{12}\tfrac{h_2^2}{12}\Lambda_1\Lambda_2,
\label{sNbeta}
\end{gather}
with the parameter $\beta$, keeping its approximation order.
Note that $\Lambda_1\Lambda_2>0$ in $H_h$.

\par But the last scheme fails for $n\geq 3$ similarly to \cite{DZR15,ZK20}.
Recall that the point is that the minimal eigenvalue of $s_N$ as the operator in $H_h$ is such that
\[
 \lambda_{\min}(s_N)>1-\tfrac{n}{3},\ \ \lambda_{\min}(s_N)=1-\tfrac{n}{3}+O\big(\tfrac{1}{N_1^2}+\ldots+\tfrac{1}{N_n^2}\big)
\]
that is suitable only for $n=1,2$,
since $s_N$ becomes almost singular for $n=3$ and even $\lambda_{\min}(s_N)<0$ for $n\geq 4$, for small $|h|$ (and a crucial property $s_N>0$ is not valid any more).
Thus for $n=3$ it is of sense to replace $s_N$ with $\bar{s}_N$ and pass to the scheme
\begin{gather}
 \bar{s}_N(\rho\Lambda_tv)+\onetwelve h_t^2 A_N\Lambda_tv+A_Nv=f_N\ \ \text{on}\ \ \omega_{\*h},
\label{num2eq}\\[1mm]
 v|_{\partial\omega_{\*h}}=g,\ \
 \bar{s}_N(\rho\delta_tv^0)+\onetwelve h_t^2A_N\delta_tv^0+\half h_tA_Nv^0=u_{1N}+\half h_tf_N^0\ \ \text{on}\ \ \omega_h.
\label{num2ic}
\end{gather}
Next, for any $n\geq 1$, one can further replace $A_N$ with $\bar{A}_N$ and get the following unified scheme
\begin{gather}
 \bar{s}_N(\rho\Lambda_tv)+\onetwelve h_t^2\bar{A}_N\Lambda_tv+\bar{A}_Nv=f_N\ \ \text{on}\ \ \omega_{\*h},
\label{num3eq}\\[1mm]
 v|_{\partial\omega_{\*h}}=g,\ \
 \bar{s}_N(\rho\delta_tv^0)+\onetwelve h_t^2\bar{A}_N\delta_tv^0+\half h_t\bar{A}_Nv^0=u_{1N}+\half h_tf_N^0\ \ \text{on}\ \ \omega_h
\label{num3ic}
\end{gather}
(for $\rho(x)\equiv 1$, it goes back to \cite{DZR15} in the case of the time-dependent Schr\"{o}dinger equation).
In the last two schemes, we use the operators
\begin{gather}
 \bar{s}_N:=\prod_{k=1}^ns_{kN},\,\ \bar{s}_{N\widehat{l}}:=\prod_{1\leq k\leq n,\,k\neq l}s_{kN},\,\
 s_{kN}:=I+\onetwelve h_k^2\Lambda_k,\,\
\label{bsN}\\[1mm]
 \bar{A}_N:=-(a_1^2\bar{s}_{N\widehat{1}}\Lambda_1+\ldots+a_n^2\bar{s}_{N\widehat{n}}\Lambda_n),
\label{bAN}
\end{gather}
where $\bar{s}_N$ is the splitting version of $s_N$, and $\bar{s}_{N\widehat{l}}$ is similar to
$\bar{s}_N$ excluding the direction $x_l$,
with $\bar{s}_{N\widehat{1}}=I$ for $n=1$.
All of them are symmetric positive definite as the operators in $H_h$.

\par We also have $(\tfrac23)^nI<\bar{s}_N<I$ in $H_h$.
The following formula connects $\bar{s}_N$ and $s_N$
\begin{gather}
 \bar{s}_N=s_N+\sum_{k=2}^n\bar{s}_N^{\,(k)},\ \
 \bar{s}_N^{\,(k)}:=(\onetwelve)^k\sum_{1\leq i_1<\ldots<i_k\leq n}h_{i_1}^2\ldots h_{i_k}^2\Lambda_{i_1}\ldots\Lambda_{i_k}.
\label{bsNsN}
\end{gather}
Notice that $(-1)^k\bar{s}_N^{\,(k)}>0$ in $H_h$, $2\leq k\leq n$.

\par Here $\bar{A}_N=A_N$ for $n=1,2$, and for $n=1$ the last scheme coincides with \eqref{num0eq 1d}-\eqref{num0ic 1d} but
\begin{gather}
 \bar{A}_N=A_N+\bar{A}_N^{(3)}
 =-(a_1^2\Lambda_1+a_2^2\Lambda_2+a_3^2\Lambda_3)+\bar{A}_N^{(2)}+\bar{A}_N^{(3)},\ \
\nonumber\\[1mm]
 \bar{A}_N^{(2)}:=-\onetwelve\big[(a_1^2h_2^2+a_2^2h_1^2)\Lambda_1\Lambda_2
            +(a_1^2h_3^2+a_3^2h_1^2)\Lambda_1\Lambda_3
            +(a_2^2h_3^2+a_3^2h_2^2)\Lambda_2\Lambda_3\big],
\nonumber\\[1mm]
\bar{A}_N^{(3)}:=-\tfrac{1}{12^2}(a_1^2h_2^2h_3^2+a_2^2h_1^2h_3^2+a_3^2h_1^2h_2^2)\Lambda_1\Lambda_2\Lambda_3
\label{barA N}
\end{gather}
for $n=3$, with $\bar{A}_N^{(2)}<0$ and $\bar{A}_N^{(3)}>0$ in $H_h$.

\par Due to the formulas
\[
 \bar{A}_N-A_N=-a_1^2(\bar{s}_{N\widehat{1}}-s_{N\widehat{1}})\Lambda_1-\ldots-a_n^2(\bar{s}_{N\widehat{n}}-s_{N\widehat{n}})\Lambda_n
\]
and \eqref{bsNsN},
the approximation errors of schemes \eqref{num2eq}-\eqref{num2ic} and \eqref{num3eq}-\eqref{num3ic} have the same order $\mathcal{O}(|\*h|^4)$ as the preceding scheme \eqref{num1eq}-\eqref{num1ic}.

For $n=3$, one can easily generalize scheme \eqref{num2eq}-\eqref{num2ic} by the extensions
\begin{gather}
 \bar{s}_N=s_{1N}s_{2N}s_{3N} \mapsto
 s_{N\beta\gamma}:=s_N
 +\beta\bar{s}_N^{\,(2)}
 +\gamma\bar{s}_N^{\,(3)},\ \
 A_N \mapsto A_{N\theta}:=A_N+\theta\bar{A}_N^{(3)},
\label{ANtheta}
\end{gather}
with the three parameters $\beta,\gamma$ and $\theta$, keeping its approximation order.
Here we have explicitly
\begin{gather}
 \bar{s}_N^{\,(2)}=\tfrac{1}{12^2}(h_1^2h_2^2\Lambda_1\Lambda_2+h_1^2h_3^2\Lambda_1\Lambda_3+h_2^2h_3^2\Lambda_2\Lambda_3),\ \
 \bar{s}_N^{\,(3)}=\tfrac{1}{12^3}h_1^2h_2^2h_3^2\Lambda_1\Lambda_2\Lambda_3\ \ \text{for}\ \ n=3
\label{bar s N}
\end{gather}
as well as
\[
 s_{N\beta}=(1-\beta)s_N+\beta\bar{s}_N\ \text{for}\ n=2;\,\
 s_{N\beta\beta}=(1-\beta)s_N+\beta\bar{s}_N,\
 A_{N\theta}:=(1-\theta)A_N+\theta \bar{A}_N\ \text{for}\ n=3.
\]
\par The following explicit expansions in $\Lambda_k$ for the operators at the upper time level in \eqref{num1eq} for $n=2$ and \eqref{num2eq} for $n=3$ hold
\begin{gather*}
 s_N(\rho w)+\onetwelve h_t^2A_Nw=\rho w
 +\onetwelve\big[(h_1^2\Lambda_1+h_2^2\Lambda_2)(\rho w)-h_t^2(a_1^2\Lambda_1+a_2^2\Lambda_2)w\big]
\\[1mm]
 -(\onetwelve)^2h_t^2\big(a_1^2h_2^2+a_2^2h_1^2\big)\Lambda_1\Lambda_2w\ \ \text{for}\ \ n=2,
\\[1mm]
 \bar{s}_N(\rho w)+\onetwelve h_t^2\bar{A}_Nw
 =\rho w+\onetwelve\big[(h_1^2\Lambda_1+h_2^2\Lambda_2+h_3^2\Lambda_3)(\rho w)
 -h_t^2(a_1^2\Lambda_1+a_2^2\Lambda_2+a_3^2\Lambda_3)w\big],
\\[1mm]
 +\bar{s}_N^{\,(2)}(\rho w)
 -\tfrac{1}{12^2}h_t^2\big[(a_1^2h_2^2+a_2^2h_1^2)\Lambda_1\Lambda_2
                          +(a_1^2h_3^2+a_3^2h_1^2)\Lambda_1\Lambda_3
                          +(a_2^2h_3^2+a_3^2h_2^2)\Lambda_2\Lambda_3\big]w
\\[1mm]
 +\bar{s}_N^{\,(3)}(\rho w)-\onetwelve h_t^2\bar{A}_N^{(3)}w\ \ \text{for}\ \ n=3,
\end{gather*}
see also formulas in \eqref{bar s N} and \eqref{barA N} for the last two terms.
In the particular case of $a_i$ and $h_i$ independent on $i$ (i.e., for the square spatial mesh), the formulas are simplified, and the operators on the left in them differ only up to factors from those given in the related formulas (21)-(22) in \cite{BTT18} and (11) in \cite{STT19}.
Moreover, turning to formulas \eqref{3termrrb}-\eqref{tsynkov2}, one can show that in this case equations \eqref{num1eq} for $n=2$ and \eqref{num3eq} for $n=3$ are equivalent to respective methods from \cite{BTT18,STT19} up to our simpler approximations of $f$.
But it should be emphasized that
we prefer to supplement them by other than in \cite{BTT18,STT19} similar equations \eqref{num1ic} and \eqref{num3ic} for $v^1$.

\par Also, in the same particular case, the family of methods with the operators
\begin{gather}
 s_{N\beta\gamma}(\rho w)+\onetwelve h_t^2A_{N\theta},\ \
 \text{with}\ \ \beta=2,\ \ \gamma=12(1-\varkappa),\ \theta=4(\varkappa-1), \ -\tfrac12<\varkappa<3
\label{family tsynkov}
\end{gather}
at the upper level was also studied in \cite[Section 3.2.2]{STT19},
though according to the above analysis, the values $\varkappa\leq 1$ (including the so-called canonical based scheme for $\varkappa=1$ in \cite{STT19}) can hardly be recommended for exploiting.
These methods are related to equation \eqref{num2eq} with the extended operators \eqref{ANtheta} in the same way (actually for any $\beta$, $\gamma$ and~$\theta$).
\par Now we prove the conditional stability theorem for all the above constructed schemes.
\begin{theorem}
\label{theo:2}
Let $g=0$ in \eqref{hyperb2ibc}.
Let us consider:
\begin{enumerate}
\item scheme \eqref{num1eq}-\eqref{sNbeta} for $n=2$,
\item scheme \eqref{num2eq}-\eqref{num2ic} and \eqref{ANtheta} for $n=3$,
\item scheme \eqref{num3eq}-\eqref{num3ic} for $n\geq 1$ (for $n=1$, the scheme \eqref{num0eq 1d}-\eqref{num0ic 1d} is the same)
\end{enumerate}
and set respectively
$(B_h,A_h)=(s_{N\beta},A_N)$,
$(B_h,A_h)=(s_{N\beta\gamma},A_{N\theta})$ and
$(\bar{s}_N,\bar{A}_N)$.

\par Let the parameters $\beta,\gamma$ and $\theta$ be chosen such that $B_h>0$ and $A_h>0$ in $H_h$,
and $A_h\leq\alpha_h^2 B_h$ with some $\alpha_h$ (see \eqref{ahbh}) for the first and second schemes.
Let also $0<\ve_0<1$, and the condition
\begin{gather}
 \tfrac16 h_t^2\alpha_h^2\leq (1-\ve_0^2)\urho,
\label{stabcondN}
\end{gather}
for the first and second schemes, or the explicit condition
\begin{gather}
 h_t^2\big(\tfrac{a_1^2}{h_1^2}+\ldots+\tfrac{a_n^2}{h_n^2}\big)
 \leq (1-\ve_0^2)\urho
\label{condhth2bar}
\end{gather}
for the third scheme, be valid (see also Remark \ref{betgamthet} below).
Then, for any free terms $f_N$: $\{t_m\}_{m=0}^{M-1}\to H_h$ and $u_{1N}\in H_h$ (not only for those specific defined above),
the solutions to all three schemes satisfy the following two stability bounds:
\begin{gather}
\max_{1\leq m\leq M}\Big(\ve_0^2\|\sqrt{\rho}\bar{\delta}_tv^m\|_h^2+\|\bar{s}_tv^m\|_{B_h^{-1}A_h}^2\Big)^{1/2}
\nonumber\\[1mm]
\leq\Big(\|v^0\|_{B_h^{-1}A_h}^2+\ve_0^{-2}\big\|\tfrac{1}{\sqrt{\rho}}B_h^{-1}u_{1N}\big\|_h^2\Big)^{1/2}
       +2\ve_0^{-1}\big\|\tfrac{1}{\sqrt{\rho}}B_h^{-1}f_N\big\|_{L_{h_t}^1(H_h)},
\label{energy est1N}
\end{gather}
where the $f_N$-term can be taken also as
$2I_{h_t}^{M-1}\|B_h^{-1/2}A_h^{-1/2}\bar{\delta}_tf_N\|_h
 +3\max\limits_{0\leq m\leq M-1}\|B_h^{-1/2}A_h^{-1/2}f_N^m\|_h;$
\begin{gather}
 \max_{0\leq m\leq M}
 \max\big\{\ve_0\|\sqrt{\rho}v^m\|_h,\,\|I_{h_t}^m\bar{s}_tv\|_{B_h^{-1}A_h}\big\}
\nonumber\\[1mm]
 \leq\|\sqrt{\rho}v^0\|_h
 +2\|B_h^{-1/2}A_h^{-1/2}u_{1N}\|_h
 +2\|B_h^{-1/2}A_h^{-1/2}f_N\|_{L_{h_t}^1(H_h)},
\label{energy est2N}
\end{gather}
where, for $f_N=\delta_tg$, one can replace the $f_N$-term with $2\ve_0^{-1}I_{h_t}^{M}\big\|\tfrac{1}{\sqrt{\rho}}B_h^{-1}\big(g-s_tg^0\big)\big\|_h$.
\end{theorem}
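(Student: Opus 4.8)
The plan is to recognize each of the three schemes as a particular case of the abstract three-level method \eqref{3level sch1}-\eqref{3level sch2} with the weight $\sigma=\onetwelve$ and with the operator pairs $(B_h,A_h)$ named in the statement, and then to invoke Theorem \ref{theo:1}. First I would verify the structural hypotheses of that theorem. Each candidate operator --- $s_{N\beta}$, $s_{N\beta\gamma}$, $\bar{s}_N$ and $A_N$, $A_{N\theta}$, $\bar{A}_N$ --- is a polynomial in the pairwise commuting one-dimensional operators $\Lambda_1,\ldots,\Lambda_n$, each of which is self-adjoint and negative definite on $H_h$ under the homogeneous Dirichlet condition. Hence all these operators are self-adjoint and mutually commuting, so in particular $A_hB_h=B_hA_h$; positivity $B_h>0$, $A_h>0$ is guaranteed by the parameter restrictions assumed for the first two schemes and is automatic for $\bar{s}_N$, $\bar{A}_N$ by \eqref{bsN}-\eqref{bAN}, while the existence of $\alpha_h$ in \eqref{ahbh} is assumed for the first two schemes. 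Since $\onetwelve<\tfrac14$, we fall into the conditional branch of Theorem \ref{theo:1}, and with $\sigma=\onetwelve$ the quantity $(\tfrac14-\sigma)h_t^2\alpha_h^2$ equals $\tfrac16 h_t^2\alpha_h^2$, so condition \eqref{stabcond} coincides with \eqref{stabcondN}.

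For the third scheme I would next make $\alpha_h^2$ explicit. Because $B_h^{-1}A_h=\bar{s}_N^{-1}\bar{A}_N$ and all factors are simultaneously diagonalized by the tensor-product discrete sine basis, and the eigenvalues of $-\Lambda_k$ fill $(0,4/h_k^2)$, writing $\mu_k$ for such an eigenvalue, the corresponding eigenvalue of $\bar{s}_N^{-1}\bar{A}_N$ equals $\sum_{k=1}^n a_k^2\mu_k/(1-\onetwelve h_k^2\mu_k)$. Each summand is increasing in $\mu_k$ and tends to $6a_k^2/h_k^2$ as $\mu_k\to 4/h_k^2$, whence $\alpha_h^2\leq 6\big(a_1^2/h_1^2+\ldots+a_n^2/h_n^2\big)$. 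Therefore the explicit condition \eqref{condhth2bar} implies \eqref{stabcondN} for this scheme, and no separately assumed $\alpha_h$ is needed here.

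Finally I would transfer the conclusions of Theorem \ref{theo:1}, applied with $f=f_N$ and $u_1=u_{1N}$, to the present form. Setting $\sigma=\onetwelve$ in \eqref{energy est1}-\eqref{energy est2} reproduces the right-hand sides of \eqref{energy est1N}-\eqref{energy est2N} verbatim (the equality $(A_hB_h)^{-1/2}=B_h^{-1/2}A_h^{-1/2}$ holds by commutativity, and the alternative $f_N$-terms follow from the alternative $f$-terms and the norm identity recorded at the end of the proof of Theorem \ref{theo:1}), but with the extra term $(\sigma-\tfrac14)h_t^2\|\cdot\|_{B_h^{-1}A_h}^2=-\tfrac16 h_t^2\|\cdot\|_{B_h^{-1}A_h}^2$ on the left. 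To absorb it I would use the inequality $\ve_0^2\|\sqrt{\rho}w\|_h^2\leq\|\sqrt{\rho}w\|_h^2+(\sigma-\tfrac14)h_t^2\|w\|_{B_h^{-1}A_h}^2$ already established within the proof of Theorem \ref{theo:1}, which follows from \eqref{ahbh -1}, from $\urho\|w\|_h^2\leq\|\sqrt{\rho}w\|_h^2$, and from \eqref{stabcondN}. Applied to $w=\bar{\delta}_tv^m$ it lower-bounds the strong-norm left-hand side by $\ve_0^2\|\sqrt{\rho}\bar{\delta}_tv^m\|_h^2+\|\bar{s}_tv^m\|_{B_h^{-1}A_h}^2$, giving \eqref{energy est1N}; applied to $w=v^m$, together with dropping the nonpositive $v^0$-correction on the right so that $[\|\sqrt{\rho}v^0\|_h^2+(\sigma-\tfrac14)h_t^2\|v^0\|_{B_h^{-1}A_h}^2]^{1/2}\leq\|\sqrt{\rho}v^0\|_h$, it yields \eqref{energy est2N}.

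I expect the only genuinely nontrivial step to be the eigenvalue maximization for the third scheme that converts the abstract bound into the explicit step-size condition \eqref{condhth2bar}; everything else is a direct specialization of Theorem \ref{theo:1} combined with the single absorbing inequality above.
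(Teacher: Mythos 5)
Your proposal is correct and follows essentially the same route as the paper: both proofs simply specialize Theorem \ref{theo:1} to $\sigma=\onetwelve$ with the listed operator pairs, observe that $B_h$ and $A_h$ commute (hence $(A_hB_h)^{-1/2}=B_h^{-1/2}A_h^{-1/2}$), and identify \eqref{stabcond} with \eqref{stabcondN}. The only difference is that you carry out in-line two pieces the paper delegates elsewhere --- the explicit eigenvalue bound $\alpha_h^2\leq 6\big(a_1^2/h_1^2+\ldots+a_n^2/h_n^2\big)$ for the third scheme, which the paper places in Remark \ref{betgamthet} via a citation to \cite[Lemma 3]{ZK20}, and the absorbing inequality that converts the left-hand sides of \eqref{energy est1}--\eqref{energy est2} into those of \eqref{energy est1N}--\eqref{energy est2N}, which the paper leaves implicit --- and both of your computations are correct.
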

\begin{remark}
\label{betgamthet}
Let us comment on the stability condition \eqref{stabcondN}.
For $(B_h,A_h)=(s_{N\beta},A_N)$ with $\beta\geq 0$ for $n=2$,
\[
 (B_h,A_h)=(s_{N\beta\gamma},A_{N\theta})\ \ \text{with}\ \ \beta\geq\ve_1,\ \gamma\leq\ve_1\ \ \text{with some}\ \ 0<\ve_1\leq 1,
 \ 0\leq\theta\leq 1\ \ \text{for}\ \ n=3
\]
and $(B_h,A_h)=(\bar{s}_N,\bar{A}_N)$ for $n\geq 1$, conditions $B_h>0$ and $A_h>0$ in $H_h$ hold, as well as condition \eqref{ahbh} has recently been studied in \cite[Lemma 3]{ZK20} (for $\beta=0$ and $\theta=0,1$ that is enough here).
Consequently condition \eqref{stabcondN} is valid under the assumption
\[
 C_1h_t^2\big(\tfrac{a_1^2}{h_1^2}+\ldots+\tfrac{a_n^2}{h_n^2}\big)\leq (1-\ve_0^2)\urho
\]
where $C_1=\frac43,\ve_1^{-1}$ or $1$
respectively for the first, second or third scheme.
The reason is that, under the assumptions made on $\beta$, $\gamma$ and $\theta$, the following operator inequalities in $H_h$ hold
\begin{gather}
 s_N \leq s_{N\beta}\ \ \text{for}\ \ n=2,\ \
 \ve_1\bar{s}_N\leq s_{N\beta\gamma}\ \text{and}\ A_{N\theta}\leq\bar{A}_N\ \ \text{for}\ \ n=3.
\label{ineq sNbet gam}
\end{gather}
This is an example, and we do not intend here to study condition \eqref{ahbh} for general $\beta$, $\gamma$ and $\theta$.
\end{remark}
\begin{proof}
The theorem follows directly from the general stability Theorem \ref{theo:1}, for $B_h$ and $A_h$ listed in the statement, in the particular case $\sigma=\onetwelve$, specifying assumption \eqref{stabcond} and inequality \eqref{ahbh -1}.
Here $B_h$ and $A_h$ commute since they have the same system of eigenvectors in $H_h$.

\par In the second form of the $f_N$-term in \eqref{energy est1N} and in the last two terms on the right in \eqref{energy est2N}, we also take into account that $(A_hB_h)^{-1/2}=B_h^{-1/2}A_h^{-1/2}$ due to the last mentioned property.
\end{proof}
\begin{remark}
\label{Bhnu}
Usually $\nu_0 I\leq B_h\leq\nu I$ in $H_h$ with some $\nu\geq\nu_0>0$ both independent of $\*h$;
in particular, under the assumptions on $\beta$ and $\gamma$ from Remark \ref{betgamthet} one has
\[
 \tfrac13 I<s_{N\beta}<\big(1+\tfrac{1}{9}\beta\big)I\ \ \text{for}\ \ n=2,\ \
 \ve_1(\tfrac23)^3I<s_{N\beta\gamma}<\big(1+\tfrac{1}{3}\beta+\tfrac{1}{27}\max\{-\gamma,0\}\big)I\ \ \text{for}\ \ n=3
\]
due to the
inequalities $-\frac14 h_k^2\Lambda_k<I$, \eqref{ineq sNbet gam} and $(\tfrac23)^nI<\bar{s}_N<I$ in $H_h$.
Then one can simplify the above stability bounds replacing the operator $B_h^{-1}$ with the constant $\nu^{-1}$ on the left and/or replacing $B_h^{-1}$ with $\nu_0^{-1}$ and $B_h^{-1/2}$ with $\nu_0^{-1/2}$ on the right.
\end{remark}
\par Next, based on Theorem \ref{theo:2}, we prove the 4th order error bound for the same schemes.
\begin{theorem}
\label{theo:3}
Let the coefficient $\rho$ and solution $u$ to the IBVP \eqref{hyperb2eq}-\eqref{hyperb2ibc} be sufficiently smooth respectively in $\bar{\Omega}$ and $\bar{Q}_T$.
Then under the hypotheses of Theorem \ref{theo:2} but excluding $g=0$ and for $\nu_0 I\leq B_h\leq\nu I$ with some $\nu\geq\nu_0>0$ (see Remark \ref{Bhnu}) as well as $v^0=u_0$ on $\bar{\omega}_h$, for all three schemes listed in it, the following 4th order error bound in the strong energy norm holds
\[
 \max_{1\leq m\leq M}\big[\ve_0\|\sqrt{\rho}\bar{\delta}_t(u-v)^m\|_h+\|\bar{s}_t(u-v)^m\|_{A_h}\big]=\mathcal{O}(|\*h|^4).
\]
\par Let $a_{\min}=\min_{1\leq i\leq n}a_i$, $\Delta_h=\Lambda_1+\ldots+\Lambda_n$ be the simplest approximation of the Laplace operator, and $\ve_2=1$ for the first and third schemes or $0<\ve_2\leq\theta\leq 1$ for the second one.
Then
\begin{equation}
 \sqrt{\ve_2}a_{\min}(\tfrac{2}{3})^{(n-1)/2}\|w\|_{-\Delta_h}\leq\|w\|_{A_h}\ \ \forall w\in H_h.
\label{lower bound AN}
\end{equation}
\end{theorem}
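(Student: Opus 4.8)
The plan is to treat the two assertions separately: the $\mathcal{O}(|\*h|^4)$ error bound is a consistency-plus-stability argument resting on Lemma~\ref{lem:psi} and Theorem~\ref{theo:2}, whereas inequality \eqref{lower bound AN} is a purely spectral comparison of operators. For the error bound I would first substitute the exact solution $u$, evaluated on the space-time mesh, into the relevant scheme and invoke Lemma~\ref{lem:psi} together with the earlier observations that passing from $s_N,A_N$ to $s_{N\beta}$, $A_{N\theta}$, $\bar s_N$, $\bar A_N$ changes the equations only by $\mathcal{O}(|\*h|^4)$. This shows that $u$ satisfies the scheme up to residuals $\psi$ on $\omega_{h_t}$ and $\psi_0$ in the initial equation, both pointwise $\mathcal{O}(|\*h|^4)$. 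Subtracting the identities satisfied exactly by $v$, the error $e:=u-v$ solves the same scheme with right-hand sides $\psi$ and $\psi_0$. Since $u|_{\partial\omega_{\*h}}=g=v|_{\partial\omega_{\*h}}$ and $v^0=u_0=u^0$, one has $e\in H_h$ at each time level and $e^0=0$, so the error problem is exactly of the homogeneous form \eqref{3level sch1}-\eqref{3level sch2} with the $(B_h,A_h,\sigma)$ of Theorem~\ref{theo:2}, free term $\psi$ (setting $\psi^0:=0$), and $\psi_0$ playing the role of $u_1$.

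I would then apply the strong-norm bound \eqref{energy est1N} to $e$. Because $e^0=0$, its right-hand side collapses to $\ve_0^{-1}\|\tfrac{1}{\sqrt\rho}B_h^{-1}\psi_0\|_h+2\ve_0^{-1}\|\tfrac{1}{\sqrt\rho}B_h^{-1}\psi\|_{L_{h_t}^1(H_h)}$. Bounding $\tfrac{1}{\sqrt\rho}\leq\urho^{-1/2}$ and using $B_h\geq\nu_0 I$ (hence $\|B_h^{-1}w\|_h\leq\nu_0^{-1}\|w\|_h$), together with the fact that a pointwise $\mathcal{O}(|\*h|^4)$ mesh function has discrete $L^2$-norm $\mathcal{O}(|\*h|^4)$ on the bounded domain and $\|\psi\|_{L_{h_t}^1(H_h)}\leq T\max_m\|\psi^m\|_h$, the whole right-hand side is $\mathcal{O}(|\*h|^4)$. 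To recover the $\|\cdot\|_{A_h}$-norm of the claim from the $\|\cdot\|_{B_h^{-1}A_h}$-norm on the left of \eqref{energy est1N}, I would use commutativity of $A_h,B_h$ and $B_h\leq\nu I$ to get $\|w\|_{A_h}\leq\sqrt\nu\,\|w\|_{B_h^{-1}A_h}$, so that $\ve_0\|\sqrt\rho\bar\delta_t e^m\|_h+\|\bar s_t e^m\|_{A_h}\leq C\big(\ve_0^2\|\sqrt\rho\bar\delta_t e^m\|_h^2+\|\bar s_t e^m\|_{B_h^{-1}A_h}^2\big)^{1/2}$ with $C$ independent of $\*h$; this yields the stated error bound.

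For \eqref{lower bound AN}, since all $\Lambda_k$ and hence all operators in play are simultaneously diagonalizable, it suffices to compare spectra. The key elementary fact is $-\tfrac14 h_k^2\Lambda_k<I$, i.e.\ $s_{kN}=I+\tfrac{1}{12}h_k^2\Lambda_k>\tfrac23 I$, whence $\bar s_{N\widehat{i}}=\prod_{k\neq i}s_{kN}>(\tfrac23)^{n-1}I$ (and, for $n=2$, $s_{N\widehat{i}}>\tfrac23 I$). For the third scheme, writing $\bar A_N=\sum_i a_i^2\,\bar s_{N\widehat{i}}(-\Lambda_i)$, the term-by-term estimate $\big(\bar s_{N\widehat{i}}(-\Lambda_i)w,w\big)_h\geq(\tfrac23)^{n-1}(-\Lambda_i w,w)_h$ gives $\|w\|_{\bar A_N}^2\geq(\tfrac23)^{n-1}a_{\min}^2\|w\|_{-\Delta_h}^2$; the same computation with $\ve_2=1$ covers the first scheme ($n=2$, where $\bar A_N=A_N$). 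For the second scheme I would use $A_{N\theta}=(1-\theta)A_N+\theta\bar A_N$ with $0<\ve_2\leq\theta\leq1$ and simply drop the nonnegative summand $(1-\theta)A_N$, obtaining $\|w\|_{A_{N\theta}}^2\geq\theta\|w\|_{\bar A_N}^2\geq\ve_2(\tfrac23)^{n-1}a_{\min}^2\|w\|_{-\Delta_h}^2$; taking square roots yields \eqref{lower bound AN}.

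I expect the main obstacle to be the consistency bookkeeping in the first part: verifying that the specific operator modifications and the simpler approximations $f_N$, $u_{1N}$, $f_N^0$ (with the $\mathcal{O}(h_t^3)$ freedom in $f_{dh_t}^{(0)}$ of Remark~\ref{rem:fdht0}) genuinely keep $\psi$ and $\psi_0$ at $\mathcal{O}(|\*h|^4)$ in the discrete norms used on the right of \eqref{energy est1N}, and matching $\psi_0$ to the $u_1$ slot while assigning $\psi^0:=0$ at $t_0$. Once the residual structure is pinned down, Theorem~\ref{theo:2} and the two norm comparisons complete the argument, while the operator inequality \eqref{lower bound AN} reduces entirely to the eigenvalue bound $s_{kN}>\tfrac23 I$.
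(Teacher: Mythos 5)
Your proposal is correct and follows essentially the same route as the paper: consistency via Lemma~\ref{lem:psi} (plus the observation that the operator modifications preserve the $\mathcal{O}(|\*h|^4)$ residuals), then the stability bound \eqref{energy est1N} applied to the error $r=u-v$ with $r^0=0$, the residual of the initial equation placed in the $u_{1N}$ slot, and the passage from $\|\cdot\|_{B_h^{-1}A_h}$ to $\|\cdot\|_{A_h}$ via $\nu_0 I\leq B_h\leq\nu I$. The proof of \eqref{lower bound AN} likewise matches the paper's, which reduces it to the operator inequalities $s_{kN}>\tfrac23 I$, $\ve_2\bar A_N\leq A_{N\theta}$ and term-by-term comparison with $-\Delta_h$ using commutativity.
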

\begin{proof}
Recall that the approximation errors of the equations for all the schemes are defined as
\begin{gather*}
 \psi:=B_h(\rho\Lambda_tu)+\onetwelve h_t^2A_h\Lambda_tu+A_hu-f_N\ \ \text{on}\ \ \omega_{\*h},
\\[1mm]
 \psi^0:=B_h(\rho\delta_tu^0)+\onetwelve h_t^2A_h\Lambda_t\delta_tu^0+\half h_tA_hu_0-u_{1N}-\half h_tf_N^0\ \ \text{on}\ \ \omega_h,
\end{gather*}
cf. formulas \eqref{approxerrN}-\eqref{psie0} for scheme \eqref{num0eq}-\eqref{num0ic}.
For all the
schemes, it was checked above that
\begin{gather}
 \max_{\omega_{\*h}}|\psi|+\max_{\omega_h}|\psi^0|=\mathcal{O}(|\*h|^4).
\label{appr error bound}
\end{gather}
Due to equations for $v$ as well as the definitions of $\psi$ and $\psi^0$,
the error $r:=u-v$ satisfies the following equations
\begin{gather*}
 B_h(\rho\Lambda_tr)+\onetwelve h_t^2A_h\Lambda_tr+A_hr=\psi\ \ \text{on}\ \ \omega_{\*h},
\\[1mm]
 r|_{\partial\omega_{\*h}}=0,\ \
 B_h(\rho\delta_tr^0)+\onetwelve h_t^2A_h\Lambda_t\delta_tr^0+\half h_tA_hr_0=\psi^0\ \ \text{on}\ \ \omega_h,
\end{gather*}
with the approximation errors on the right,
and $r^0=0$.
The stability bound \eqref{energy est1N}, Remark \ref{Bhnu} and estimate \eqref{appr error bound} imply the error bound
\begin{gather*}
 \max_{1\leq m\leq M}\big(\ve_0\|\sqrt{\rho}\bar{\delta}_tr^m\|_h
 +\nu^{-1/2}\|\bar{s}_tr^m\|_{A_h}\big)
 \leq \frac{1}{\ve_0\sqrt{\nu_0\urho}}\big(\|\psi^0\|_h+2I_{h_t}^{M-1}\|\psi\|_h\big)=\mathcal{O}(|\*h|^4).
\end{gather*}

\par Inequality \eqref{lower bound AN} follows from the simple operator inequalities
\[
 a_{\min}^2\tfrac{2}{3}(-\Delta_h)\leq A_N,\ \
 \ve_2a_{\min}^2(\tfrac{2}{3})^2(-\Delta_h)\leq\ve_2\bar{A}_N\leq A_{N\theta},\ \
 a_{\min}^2(\tfrac{2}{3})^{n-1}(-\Delta_h)\leq \bar{A}_N
\]
in $H_h$ respectively for the operators in the first, second and third schemes in Theorem \ref{theo:2}.
\end{proof}

\par Inequality \eqref{lower bound AN} shows that the error norm in Theorem \ref{theo:3} is stronger than the standard mesh energy norm not related to the specific operators in the schemes.

\par Usually $h_t=\mathcal{O}(|h|)$ according to conditions \eqref{stabcondN} and \eqref{condhth2bar}, then $\mathcal{O}(|\*h|^4)=\mathcal{O}(|h|^4)$.

\par Clearly under the hypotheses of Theorem \ref{theo:2}, for example, for scheme \eqref{num1eq}-\eqref{sNbeta} for $n=2$, the general energy conservation law \eqref{en cons law} takes the non-trivial form
\begin{gather*}
 \|\sqrt{\rho}\bar{\delta}_tv^m\|_h^2-\tfrac16 h_t^2\|\bar{\delta}_tv^m\|_{s_{N\beta}^{-1}A_N}^2
+\|\bar{s}_tv^m\|_{s_{N\beta}^{-1}A_N}^2
 =\big(s_{N\beta}^{-1}A_Nv^0,s_tv^0\big)_h
\nonumber\\[1mm]
 +\big(s_{N\beta}^{-1}u_{1N},\delta_tv^0\big)_h
 +\half h_t\big(s_{N\beta}^{-1}f_N^0,\delta_tv^0\big)_h+2I_{h_t}^{m-1}(s_{N\beta}^{-1}f,\mathring{\delta}_tv)_h,\ 1\leq m\leq M.
\label{en cons law 2}
\end{gather*}
The energy conservation laws for the second and third schemes in Theorem \ref{theo:2} are similar.

\section{\large An unconditionally stable finite-difference scheme of the 4th order of approximation}
\label{uncond stable scheme}
\setcounter{equation}{0}
\setcounter{lemma}{0}
\setcounter{theorem}{0}
\setcounter{remark}{0}

Now we discuss the two-level method from \cite[formulas (14), (26)]{HLZ19} constructed for $n=2$.
For $g=0$, in our notation it can be rewritten as a system of two operator equations
\begin{gather}
 \bar{\delta}_tv=c^2\big[I-\onetwelve h_t^2L_h(c^2I)\big]\bar{s}_tw+d\ \ \text{in}\ \ H_h,
\label{two level meth1}
 \\[1mm]
 \bar{\delta}_tw=\big[L_h-\onetwelve h_t^2L_h(c^2L_h)\big]\bar{s}_tv+\tilde{f}\ \ \text{in}\ \ H_h
\label{two level meth2}
\end{gather}
on $\overline\omega_{h_t}\backslash \{0\}$,
where the additional sought function $w$ approximates $\frac{1}{c^2}\partial_tu$ and originally
\[
 L_h:=s_{1N}^{-1}\Lambda_1+s_{2N}^{-1}\Lambda_2=-\bar{s}_N^{\,-1}A_N\ \ \text{for}\ \ n=2.
\]
The given free terms $d$ and $\tilde{f}$ on the right in \eqref{two level meth1}-\eqref{two level meth2} are zero in \cite{HLZ19}, and we have inserted them to cover the case of the non-homogeneous wave equation and for more detailed stability analysis (in practice, $d$ and $\tilde{f}$ are never zero due to the round-off errors).
It is well-known that such type methods are closely related to more standard three-level methods like
\eqref{3level sch1}-\eqref{3level sch2} with $\sigma=\frac14$, for example, see \cite[Section 8]{Z94}.

\par To demonstrate that, we exclude $w$ from this system.
Applying the operators $\frac{1}{c^2}\delta_t$ to \eqref{two level meth1} and
$s_t$ to \eqref{two level meth2}, we find respectively
\begin{gather*}
 \rho\Lambda_tv=\big[I-\onetwelve h_t^2L_h(c^2I)\big]\delta_t\bar{s}_tw+\rho\delta_td,
 \\[1mm]
 s_t\bar{\delta}_tw=L_h\big(I-\onetwelve h_t^2c^2L_h\big)s_t\bar{s}_tv+s_t\tilde{f}.
\end{gather*}
Inserting $s_t\bar{\delta}_tw$ from the second equation into the first one and using the formulas
\[
 \delta_t\bar{s}_tw=s_t\bar{\delta}_tw=\tfrac{\hat{w}-\check{w}}{2h_t},\ \ s_t\bar{s}_tv=v^{(1/4)}\equiv\tfrac14(\hat{v}+2v+\check{v}),
\]
we obtain the following closed equation for $v$
\begin{gather}
 \rho\Lambda_tv+A_h v^{(1/4)}=f_h\ \  \text{on}\ \ \omega_{h_t},
\label{former two level1}
\end{gather}
where we have set
\begin{gather}
 A_h:=\big[I-\onetwelve h_t^2L_h(c^2I)\big](-L_h)\big(I-\onetwelve h_t^2c^2L_h\big),\ \
 f_h:=\big[I-\onetwelve h_t^2L_h(c^2I)\big]s_t\tilde{f}+\rho\delta_td.
\label{former two level2}
\end{gather}
Notice that $A_h^*=A_h>0$ in $H_h$ since $(-L_h)^*=-L_h>0$ and
\begin{gather}
 (A_hy,y)_h=(-L_hz,z)_h,\ \ \text{with}\ \ z:=\big(I-\onetwelve h_t^2c^2L_h\big)y,\ \ \forall y\in H_h.
\label{norms Ah and Lh}
\end{gather}

\par Next, we use the formula $\bar{s}_tw=\check{w}+\half h_t\bar{\delta}_tw$ in equation \eqref{two level meth1} and divide it by $c^2$.
We also use the same formula for $v$ in \eqref{two level meth2} and apply the operator $\half h_t\big[I-\onetwelve h_t^2L_h(c^2I)\big]$ to it:
\begin{gather*}
 \rho\bar{\delta}_tv=\big[I-\onetwelve h_t^2L_h(c^2I)\big](\check{w}+\half h_t\bar{\delta}_tw)+\rho d
\\[1mm]
 = \big[I-\onetwelve h_t^2L_h(c^2I)\big]\check{w}
 +\half h_t\big[I-\onetwelve h_t^2L_h(c^2I)\big]
 \big\{\big[L_h-\onetwelve h_t^2L_h(c^2L_h)\big](\check{v}+\half h_t\bar{\delta}_tv)+\tilde{f}\big\}+\rho d.
\end{gather*}
Considering the first time level $t_1=h_t$, we find
\begin{gather}
 \big(\rho I+\tfrac14h_t^2 A_h\big)\delta_tv^0+\half h_tA_hv^0=u_{1h}+\rho d^1+\half h_tf_h^0,
\label{former two level3}
\end{gather}
where we have set
\begin{gather}
 u_{1h}:=\big[I-\onetwelve h_t^2L_h(c^2I)\big]w^0,\ \ f_h^0:=\big[I-\onetwelve h_t^2L_h(c^2I)\big]\tilde{f}^1+\rho\delta_t d^0,
\label{former two level4}
\end{gather}
with $d^0:=-d^1$ (thus  $\half h_t\rho\delta_t d^0=\rho d^1$), and it is natural to take $w^0=\rho u_1$
on $\bar{\omega}_h$.
\par Since $v^{(1/4)}=v+\frac14 h_t^2\Lambda_tv$, equations \eqref{former two level1} and \eqref{former two level3} form the particular case of method \eqref{3level sch1}-\eqref{3level sch2} for $B_h=I$ and $\sigma=\frac14$,
with $A_h$, $f_h$, $u_{1h}$ and $f_h^0$ given in \eqref{former two level2} and \eqref{former two level4}.

\par We emphasize that the derived three-level method \eqref{former two level1} and \eqref{former two level3} is straightforwardly generalized to any $n\geq 1$ by taking
\begin{gather}
 L_h=s_{1N}^{-1}\Lambda_1+\ldots+s_{nN}^{-1}\Lambda_n =-\bar{s}_N^{\,-1}\bar{A}_N,
\label{oper L any n}
\end{gather}
see formulas \eqref{bsN}-\eqref{bAN}.
Clearly its two-level operator form are the same equations \eqref{two level meth1}-\eqref{two level meth2} with this generalized $L_h$.
\par Let us derive the \textit{unconditional stability} of the generalized method for any $n\geq 1$.
\begin{theorem}
\label{theo:3level version of the 2 level}
For the solution to method \eqref{former two level1}-\eqref{former two level3} and \eqref{oper L any n} for any $n\geq 1$,
the following stability bounds hold:
\begin{gather*}
\max_{1\leq m\leq M}
\big(\|\sqrt{\rho}\bar{\delta}_tv^m\|_h^2+\|\bar{s}_tv^m\|_{A_h}^2\big)^{1/2}
\leq\Big(\|v^0\|_{A_h}^2+\ve_0^{-2}\big\|\tfrac{1}{\sqrt{\rho}}u_{1h}\big\|_h^2\Big)^{1/2}
       +2\big\|\tfrac{1}{\sqrt{\rho}}f_h\big\|_{L_{h_t}^1(H_h)},
\label{energy est1 2level}
\end{gather*}
for any free terms $f_h$: $\{t_m\}_{m=0}^{M-1}\to H_h$ and $u_{1h}\in H_h$,
where the $f_h$-term can be replaced with
$2I_{h_t}^{M-1}\|A_h^{-1/2}\bar{\delta}_tf_h\|_h+3\max\limits_{0\leq m\leq M-1}\|A_h^{-1/2}f_h^m\|_h$;
\begin{gather}
\max_{0\leq m\leq M}
\max\big\{\|\sqrt{\rho}v^m\|_h,\,\|I_{h_t}^m\bar{s}_tv\|_{A_h}\big\}
 \leq\|\sqrt{\rho}v^0\|_h
\nonumber\\[1mm]
 +2\|(-L_h)^{-1/2}w^0\|_h
 +\tfrac{h_t}{2}\|(-L_h)^{-1/2}\tilde{f}^1\|_h
 +2 I_{h_t}^{M-1}\|(-L_h)^{-1/2}s_t\tilde{f}\|_h
 +2I_{h_t}^{M}\|{\sqrt{\rho}}g\|_h,
\label{energy est2 2level}
\end{gather}
for any $d$, $\tilde{f}$: $\{t_m\}_{m=1}^{M}\to H_h$ and $w^0\in H_h$,
together with the energy conservation law
\begin{gather*}
 \|\sqrt{\rho}\bar{\delta}_tv^m\|_h^2+\|\bar{s}_tv^m\|_{A_h}^2
 =\big(A_hv^0,s_tv^0\big)_h
 +\big(u_{1h}+\half h_t f_h^0,\delta_tv^0\big)_h
 +2I_{h_t}^{m-1}(f_h,\mathring{\delta}_tv)_h,\ 1\leq m\leq M.
\end{gather*}
\end{theorem}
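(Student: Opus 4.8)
The plan is to recognize that equations \eqref{former two level1} and \eqref{former two level3} are exactly the special case $B_h=I$, $\sigma=\frac14$ of the abstract method \eqref{3level sch1}-\eqref{3level sch2}, with the data $A_h$, $f_h$, $u_{1h}$, $f_h^0$ supplied by \eqref{former two level2} and \eqref{former two level4}, and then to invoke the general stability Theorem \ref{theo:1}. First I would verify its hypotheses: $B_h=I=I^*>0$ trivially, $A_h=A_h^*>0$ was already established in \eqref{norms Ah and Lh}, and $A_hB_h=B_hA_h$ holds since $B_h=I$. Because $\sigma=\frac14\geq\frac14$, Theorem \ref{theo:1} applies with $\ve_0=1$ and with no restriction on $h_t$, which is precisely the sought \emph{unconditional} stability. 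Specializing \eqref{energy est1} (the $(\sigma-\frac14)h_t^2$ terms drop, while $B_h^{-1}A_h=A_h$, $B_h^{-1}=I$ and $(A_hB_h)^{-1/2}=A_h^{-1/2}$) yields the strong-norm bound together with its alternative $f_h$-term verbatim, and specializing the conservation law \eqref{en cons law} the same way yields the stated energy identity after grouping $(u_{1h},\delta_tv^0)_h+\half h_t(f_h^0,\delta_tv^0)_h$ into $(u_{1h}+\half h_tf_h^0,\delta_tv^0)_h$.

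The only genuine work lies in the weak-norm bound \eqref{energy est2 2level}: Theorem \ref{theo:1} delivers it in terms of $\|A_h^{-1/2}u_{1h}\|_h$ and $\|A_h^{-1/2}f_h\|_h$, whereas the target is phrased through $(-L_h)^{-1/2}$. Here I would exploit the factorization $A_h=P^*(-L_h)P$ read off from \eqref{former two level2}, where $P:=I-\onetwelve h_t^2c^2L_h$ and, by self-adjointness of $L_h$, $P^*=I-\onetwelve h_t^2L_h(c^2I)$. This gives the key identity $\|A_h^{-1/2}(P^*v)\|_h=\|(-L_h)^{-1/2}v\|_h$ for every $v\in H_h$, via the one-line computation $(A_h^{-1}P^*v,P^*v)_h=((-L_h)^{-1}v,v)_h$ using $(P^{-1})^*P^*=I$. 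Since \eqref{former two level4} gives $u_{1h}=P^*w^0$ and the $\tilde f$-parts of $f_h$ and $f_h^0$ are $P^*s_t\tilde f$ and $P^*\tilde f^1$, this identity turns each $A_h^{-1/2}$-norm into the corresponding $(-L_h)^{-1/2}$-norm, producing the $2\|(-L_h)^{-1/2}w^0\|_h$, $\frac{h_t}{2}\|(-L_h)^{-1/2}\tilde f^1\|_h$ and $2I_{h_t}^{M-1}\|(-L_h)^{-1/2}s_t\tilde f\|_h$ contributions once $\|\cdot\|_{L_{h_t}^1(H_h)}$ is unfolded.

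The remaining and most delicate step is the $\rho\delta_td$-part of the free term, which is not of the $P^*(\cdot)$ shape. By linearity in the data I would split $f_h=P^*s_t\tilde f+\rho\delta_td$ (with the matching split of $f_h^0$) and bound the two responses additively: the $\tilde f$-part by the conversion above, and the $d$-part by the ``$f=\delta_tg$'' option of Theorem \ref{theo:1}. Writing $\rho\delta_td=\delta_t(\rho d)$ and taking $g=\rho d$, this option contributes $2I_{h_t}^{M}\|\frac{1}{\sqrt\rho}(\rho d-s_t(\rho d)^0)\|_h$; the crucial simplification is that the choice $d^0=-d^1$ fixed in \eqref{former two level4} forces $s_t(\rho d)^0=\rho\,\frac{d^0+d^1}{2}=0$, so this collapses to $2I_{h_t}^{M}\|\sqrt\rho\,d\|_h$, the last term of \eqref{energy est2 2level}. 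I expect this final step — combining the operator-conversion identity for the $\tilde f$-part with the global telescoping ``$\delta_tg$'' estimate for the $d$-part (the latter already absorbing the $\rho\delta_td^0$ contribution of $f_h^0$) — to be the main obstacle, since it is where the concrete structure of the scheme in \eqref{former two level2}-\eqref{former two level4}, rather than the abstract template, really enters.
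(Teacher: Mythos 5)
Your proposal is correct and follows essentially the same route as the paper: reduction to Theorem \ref{theo:1} and the conservation law \eqref{en cons law} with $B_h=I$, $\sigma=\tfrac14$, $\ve_0=1$, the operator identity $\|A_h^{-1/2}P^*v\|_h=\|(-L_h)^{-1/2}v\|_h$ (the paper's ``chain of transformations'') to convert the $u_{1h}$- and $\tilde f$-norms, and the $f=\delta_tg$ option with $g=\rho d$ exploiting $d^0=-d^1$ so that $s_t(\rho d)^0=0$. No gaps.
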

\begin{proof}
The first stability bound, the second stability bound in the form
\begin{gather}
\max_{0\leq m\leq M}
\max\big\{\|\sqrt{\rho}v^m\|_h,\,\|I_{h_t}^m\bar{s}_tv\|_{A_h}\big\}
 \leq\|\sqrt{\rho}v^0\|_h
 +2\|A_h^{-1/2}u_{1h}\|_h
 +2\|A_h^{-1/2}f_h\|_{L_{h_t}^1(H_h)}
\label{energy est2 2level 0}
\end{gather}
and the stated energy conservation law directly {follow}
from general Theorem \ref{theo:1} and law \eqref{en cons law}
in the case $B_h=I$ and $\sigma=\tfrac14$ (recall that then $\ve_0=1$).
In addition, the term $\rho\delta_td$ can be extracted from $f_h$ in \eqref{energy est2 2level 0} and added as $2I_{h_t}^{M}\|{\sqrt{\rho}}g\|_h$ (since $s_tg^0=0$) on the right like it stands in \eqref{energy est2 2level}.
Notice that the bounds and the law are especially simplified in this particular case.
\par Moreover, the following chain of transformations hold
\begin{gather*}
 \|A_h^{-1/2}w\|_h^2=(A_h^{-1}w,w)_h
 =\big(\big(I-\onetwelve h_t^2c^2L_h\big)^{-1}(-L_h)^{-1}\big[I-\onetwelve h_t^2L_h(c^2I)\big]^{-1}w,w\big)_h
\\[1mm]
 =\|(-L_h)^{-1/2}\big[I-\onetwelve h_t^2L_h(c^2I)\big]^{-1}w\|_h^2
 \ \ \forall w\in H_h,
\end{gather*}
cf. \eqref{norms Ah and Lh}.
This result allows us to pass from the norms of $u_{1h}$ and $f_h$ given in \eqref{energy est2 2level 0} to norms of $w^0$ and $\tilde{f}$ standing in bound \eqref{energy est2 2level}.
\end{proof}
\par Note that here the norms $\|\cdot\|_{A_h}$ can be rewritten in terms of $\|\cdot\|_{-L_h}$ and $L_h$ according to formula
\eqref{norms Ah and Lh} that remains valid for any $n\geq 1$.
\par We finally emphasize that clearly the operator $A_h$ and the right-hand terms $f_h$ and $u_{1h}$, see \eqref{former two level2} and
\eqref{former two level4},
with $L_h$ given in \eqref{oper L any n}, and consequently the implementation of the method are much more complicated than the corresponding operators and the right-hand terms in the schemes constructed in Section \ref{numerovschemes} since the latter ones do not contain neither non-explicit (inverse) operators nor powers of the mesh operators.

\section{\large The case of non-uniform meshes in space and time}
\label{non inif mesh}
\setcounter{equation}{0}
\setcounter{lemma}{0}
\setcounter{theorem}{0}
\setcounter{remark}{0}

In this Section, we briefly dwell on the case of non-uniform rectangular meshes in $x$ and $t$ when the schemes can be extended following \cite{ZK20}.
Note that this is necessary, in particular, for extending the schemes to more general domains including those composed from rectangular parallelepipeds or for implementing a dynamic choice of the time step.
We confine ourselves only by the scheme like \eqref{num3eq}-\eqref{num3ic} for any $n\geq 1$
and emphasize  that the scheme now will be constructed directly, \textit{without} considering intermediate schemes like above in Section \ref{numerovschemes}.
\par Define the general non-uniform meshes $\overline\omega_{h_t}$ in $t$ and $\bar{\omega}_{hk}$ in $x_k$ with the nodes
\[
 0=t_0<t_1<\ldots<t_M=T,\ \ 0=x_{k0}<x_{k1}<\ldots<x_{kN_k}=X_k
\]
and the steps $h_{tm}=t_m-t_{m-1}$ and $h_{kl}=x_{kl}-x_{k(l-1)}$, $1\leq k\leq n$.
Let $\omega_{hk}=\{x_{kl}\}_{l=1}^{N_k-1}$.
We set
\begin{gather*}
 h_{t+,\,m}=h_{t(m+1)},\ \
 h_{*t}=\half(h_t+h_{t+}),\ \ h_{k+,\,l}=h_{k(l+1)},\ \ h_{*k}=\half(h_k+h_{k+})
\end{gather*}
and define also the maximal mesh steps
\begin{gather*}
 h_{t\max}=\max_{1\leq m\leq M}h_{tm},\ \
 h_{\max}=\max_{1\leq k\leq n}\max_{1\leq l\leq N_k} h_{kl},\ \ \*h_{\max}=\max\,\{h_{\max},h_{t\max}\}.
\end{gather*}
Let now
$\bar{\omega}_{h}=\bar{\omega}_{h1}\times\ldots\times\bar{\omega}_{hn}$,
$\omega_{h}=\omega_{h1}\times\ldots\times\omega_{hn}$ and $\partial\omega_h=\bar{\omega}_{h}\backslash\omega_{h}$.

\par We generalize the above defined difference operators in $t$ and $x_k$ as
\begin{gather*}
 \delta_ty=\tfrac{1}{h_{t+}}(\hat{y}-y),\ \
 \bar{\delta}_ty=\tfrac{1}{h_{t}}(y-\check{y}),\ \ \Lambda_ty=\tfrac{1}{h_{*t}}(\delta_ty-\bar{\delta}_ty),\ \
\\[1mm]
 \Lambda_kw_l=\tfrac{1}{h_{*k}}\big[\tfrac{1}{h_{k(l+1)}}(w_{l+1}-w_l)-\tfrac{1}{h_{kl}}(w_l-w_{l-1})\big],
 \ \ \text{with}\ \ w_l=w(x_{kl}).
\end{gather*}
Next we generalize the above averaging technique including the following average in $x_k$:
\begin{gather*}
 q_kw(x_{kl})=\frac{1}{h_{*k,l}}\int_{I_{kl}}w(x_k)e_{kl}(x_k)\,dx_k,
\\[1mm]
 \text{with}\ \
 e_{kl}(x_k)=\tfrac{x_k-x_{k(l-1)}}{h_{kl}}\ \ \text{on}\ \ [x_{k(l-1)},x_{kl}],\
 e_{kl}(x_k)=\tfrac{x_{k(l+1)}-x_k}{h_{k(l+1)}}\ \ \text{on}\ \ [x_{kl},x_{k(l+1)}].
\end{gather*}
For a function $w(x_k)$ smooth on $[0,X_k]$, formula \eqref{qklambdak} remains valid.
Also now we have
\begin{gather}
 q_kw=w+q_k\rho_{k1}(\partial_kw),
\nonumber\\[1mm]
 q_kw=w+\tfrac13(h_{k+}-h_k)\partial_kw+\onetwelve\big[(h_{k+})^2-h_{k+}h_k+h_k^2\big]\partial_k^2w+q_k\rho_{k3}(\partial_k^3w)
\label{qkexpansion}
\end{gather}
on $\omega_{hk}$.
The first bound \eqref{resid_bound} is now valid for $s=1,3$, with $h_k$ replaced with $h_{*k}$,
that follows from Taylor's formula after calculating the arising integrals of polynomials over $I_{kl}$ and using residual \eqref{taylor_residual}.
Next, once again due to Taylor's formula, we derive
\begin{gather}
 \partial_kw=\half(\bar{\delta}_kw+\delta_kw)-\tfrac14(h_{k+}-h_k)\partial_k^2w+\rho_{k}^{(1)}(\partial_k^3w),\ \
 \partial_k^2w=\Lambda_kw+\rho_{k3}^{(2)}(\partial_k^3w),
\label{p1 p2}\\[1mm]
 |\rho_{k}^{(s)}(\partial_k^3w)|\leq c^{(s)}h_{*k}^{3-s}
 \|\partial_k^3w\|_{C(I_{kl})},\ \ s=1,2,
\label{p1 p2 err}
\end{gather}
on $\omega_{hk}$.
Inserting expansions \eqref{p1 p2} into
expansion \eqref{qkexpansion} and using \eqref{p1 p2 err} lead to the formulas
\begin{gather}
 q_kw=s_{kN}w+\tilde{\rho}_{k3}(\partial_k^3w),\ \
 |\tilde{\rho}_{k3}(\partial_k^3w)|\leq \tilde{c}_3h_{*k}^3\|\partial_k^3w\|_{C(I_{kl})},
\label{qk skN}
\end{gather}
on $\omega_{hk}$, with the generalized Numerov-type averaging operator in $x_k$
\begin{gather*}
 s_{kN}:=I+\tfrac13(h_{k+}-h_k)\big[\half(\bar{\delta}_k+\delta_k)-\tfrac14(h_{k+}-h_k)\Lambda_k\big]
 +\onetwelve\big[(h_{k+})^2-h_{k+}h_k+h_k^2\big]\Lambda_k
\nonumber\\[1mm]
 =I+\tfrac16(h_{k+}-h_k)(\bar{\delta}_k+\delta_k)+\onetwelve h_kh_{k+}\Lambda_k.
\end{gather*}
Consequently the following two more forms for $s_{kN}$ also hold
\begin{gather*}
 s_{kN}w_l
 =w_l+\onetwelve[(h_{k+}\beta_{k}\delta_k-h_{k}\alpha_{k}\bar{\delta}_k)w]_l
 =\onetwelve(\alpha_{kl}w_{l-1}+10\gamma_{kl}w_l+\beta_{kl}w_{l+1}),
\\[1mm]
 \text{with}\ \
 \alpha_k=2-\tfrac{h_{k+}^2}{h_kh_{*k}},\ \beta_k=2-\tfrac{h_k^2}{h_{k+}h_{*k}},\ \gamma_k=1+\tfrac{(h_{k+}-h_k)^2}{5h_k h_{k+}},\ \alpha_k+10\gamma_k+\beta_k=12
\end{gather*}
on $\omega_{hk}$.
Note that other derivations and forms for $s_{kN}$ can be found in \cite{JIS84,ChS18,RCM14}.

\par Quite similarly the following formulas with the generalized average $q_tw=q_{n+1}w$ and the Numerov-type operator $s_{tN}$ in $t$ hold on $\omega_{h_t}$:
\begin{gather}
 q_tw=s_{tN}w+\tilde{\rho}_{t3}(\partial_t^3w),\ \
|\tilde{\rho}_{t3}(\partial_t^3w)|\leq \tilde{c}_3h_{*t}^3\|\partial_t^3w\|_{C[t_{m-1},t_{m+1}]},
\label{qt stN}\\[1mm]
 s_{tN}y=y+\onetwelve(h_{t+}\beta_t\delta_t-h_t\alpha_t\bar{\delta}_t)y
 =\onetwelve(\alpha_t\check{y}+10\gamma_ty+\beta_t\hat{y}),
\nonumber\\[1mm]
 \text{with}\ \ \alpha_t=2-\tfrac{h_{t+}^2}{h_th_{*t}},\ \beta_t=2-\tfrac{h_t^2}{h_{t+}h_{*t}},\,
 \gamma_t=1+\tfrac{(h_{t+}-h_t)^2}{5h_t h_{t+}}.
\nonumber
\end{gather}

\par Let the operators $\bar{s}_N$, $\bar{s}_{N\widehat{l}}$ and $\bar{A}_N$ be defined as in \eqref{bsN}-\eqref{bAN} but with the generalized terms $s_{kN}$ and $\Lambda_k$.
Formula \eqref{avereq} for $u$ remains valid and due to expansions \eqref{qk skN}-\eqref{qt stN} implies
\[
 \bar{s}_N(\rho\Lambda_tu)
 -(a_1^2\bar{s}_{N\widehat{1}}\Lambda_1+\ldots+a_n^2\bar{s}_{N\widehat{n}}\Lambda_n)s_{tN}u
 =\bar{q}q_tf+O(\*h_{\max}^3)\ \ \text{on}\ \ \omega_{\*h}.
\]

\par Formula \eqref{avereq_0} for $u$ remains valid as well, where $q_ty^0$ is given by formula \eqref{qty0} with $h_{t1}$ instead of $h_t$.
It concerns only time levels $t_0=0$ and $t_1=h_{t1}$ thus easily covers the case of the non-uniform mesh in $t$ and implies now
\[
 \bar{s}_N(\rho\delta_tu)^0
 =\bar{q}(\rho u_1)+(a_1^2\bar{s}_{N\widehat{1}}\Lambda_1+\ldots+a_n^2\bar{s}_{N\widehat{n}}\Lambda_n)
 \big[\tfrac{h_{t1}}{2}u_0+\tfrac{h_{t1}^2}{12}u_1+\tfrac{h_{t1}^2}{12}(\delta_tu)^0\big]+\bar{q}q_tf^0+O(\*h_{\max}^3)
\]
on $\omega_h$, cf. \eqref{aver_equat 0}.

\par Due to the above formulas for $\Lambda_t$ and $s_{tN}$ as well as expansions \eqref{qk skN}-\eqref{qt stN}, the last two expansions for $u$ with omitted $O(\*h_{\max}^3)$-terms imply the generalized scheme \eqref{num3eq}-\eqref{num3ic} on the non-uniform mesh
\begin{gather}
 \tfrac{1}{h_{*t}}\big\{\bar{s}_N(\rho\delta_tv)+\tfrac{h_{*t}h_{t+}}{12}\beta_t\bar{A}_N\delta_tv
                  -\big[\bar{s}_N(\rho\bar{\delta}_tv)+\tfrac{h_{*t}h_t}{12}\alpha_t\bar{A}_N\bar{\delta}_tv\big]\big\}
 +\bar{A}_Nv=\bar{s}_Ns_{tN}f\ \ \text{on}\ \ \omega_{\*h},
\label{num3eq nonuni}\\[1mm]
 v|_{\partial\omega_{\*h}}=g,\ \bar{s}_N(\rho\delta_tv)^0+\tfrac{h_{t1}^2}{12}\bar{A}_N(\delta_tv)^0
 +\tfrac{h_{t1}}{2}\bar{A}_Nv_0
 =\bar{s}_N(\rho u_1)-\tfrac{h_{t1}^2}{12}\bar{A}_Nu_1+\tfrac{h_{t1}}{2}f_N^0\ \ \text{on}\ \ \omega_h,
\label{num3ic nonuni}
\end{gather}
with $f_N^0=\bar{s}_Nf_0+\tfrac{h_{t1}}{3}(\delta_tf)^0$.
Its equations
have the approximation errors of the order $O(\*h_{\max}^3)$.
\par For the uniform mesh in $t$, the left-hand side of \eqref{num3eq nonuni} takes the form like above in \eqref{num3eq}:
\[
 \bar{s}_N(\rho\Lambda_tv)+\onetwelve h_t^2\bar{A}_N\Lambda_tv+\bar{A}_Nv=\bar{s}_Ns_{tN}f,
\]
and the equation has the higher approximation order $O(h_{\max}^3+h_{t\max}^4)$
due to relations \eqref{qkw 3}-\eqref{resid_bound} for $k=n+1$.

\par Other above constructed schemes can be also generalized to the case of non-uniform meshes in the similar manner.
In addition, one can check also that the approximation errors still has the 4th order $O(\*h_{\max}^4)$ for non-uniform meshes with slowly varying mesh steps, cf. \cite{Z15},
provided that, for example, $f_N^0=\bar{s}_Nf^0-f^0+f_{dh_t}^{(0)}$.

\par Here we do not intend to study the stability issue in the case of the non-uniform mesh (even only in space) which
is essentially more cumbersome since the operators $s_{kN}$ are not self-adjoint as well as $s_{kN}$ and $\Lambda_k$ do not commute any more.
Moreover, this can lead to much stronger conditions on $h_t$, especially in the case when the corresponding eigenvalue problem \eqref{geneigval} has complex eigenvalues, see \cite{Z15,ZC20}.
On the other hand, for smoothly varying mesh steps and not only, results of 1D numerical experiments are positive, see \cite{Z15,ZK20}.

\section{\large Iterative methods and numerical experiments}
\label{numerexperiments}
\setcounter{equation}{0}
\setcounter{lemma}{0}
\setcounter{theorem}{0}
\setcounter{remark}{0}

\textbf{\ref{numerexperiments}.1.} We go back to equation \eqref{laeq} at the upper time level, or omitting the superscript $m$ and taking $\sigma=\onetwelve$, to the equation
\begin{gather}
 B_h(\rho w)+\tfrac{1}{12}h_t^2A_hw=b\ \ \text{in}\ \ H_h,
\label{eqBAb}
\end{gather}
with any commuting operators $B_h^*=B_h>0$ and $A_h^*=A_h>0$, in particular, for all pairs of operators $(B_h,A_h)$ considered in Section \ref{numerovschemes}.
Thus we assume that the non-homogeneous boundary condition $v|_{\partial\omega_{\*h}}=g$ is reduced to the homogeneous one $v|_{\partial\omega_{\*h}}=0$ by respective change in $f_N$ and $u_{1N}$ at the mesh nodes of $\omega_h$ closest to $\partial\omega_h$.

\par We first consider the one-step iterative method with a constant parameter $\theta>0$:
\begin{gather}
 B_h\big(\rho\tfrac{w^{(l+1)}-w^{(l)}}{\theta}\big)+B_h(\rho w^{(l)})+\tfrac{1}{12}h_t^2A_hw^{(l)}=b,\ \ l\geq 0,
\label{eqBAb2}
\end{gather}
where $B_h$ serves as a preconditioner.
Its equivalent practical form is
\begin{gather}
 w^{(l+1)}=w^{(l+1)}(\theta):=(1-\theta)w^{(l)}-\tfrac{\theta}{\rho}B_h^{-1}\big(\tfrac{1}{12}h_t^2A_hw^{(l)}-b\big),\ \ l\geq 0.
\label{eqBAb2 new}
\end{gather}
For schemes from Section \ref{numerovschemes},
{application of}  $B_h^{-1}$ can be effectively implemented by FFT.
\begin{theorem}
\label{theo: 1st iter meth}
Let the stability condition \eqref{stabcondN} on $h_t$ be valid for some $0<\ve_0<1$.

\par For the one-step iterative method \eqref{eqBAb2} with the parameter
$\theta:=\theta_{opt}=2/(1+\bar{\lambda}(\ve_0^2))$, where $\bar{\lambda}(\ve_0^2):=1+\half(1-\ve_0^2)$,
the convergence rate estimate holds
\begin{gather}
 \|w-w^{(l)}\|\leq q_0^l\|w-w^{(0)}\|,\ \ l\geq 0,\ \ \forall w^{(0)}\in H_h,
\label{conv rate 1 iter meth 1}
\end{gather}
in two norms $\|\cdot\|=\|\sqrt{\rho}\cdot\|_h$ and $\|\cdot\|_{\mathcal{A}_{\*h}}$, with $\mathcal{A}_{\*h}:=D_\rho+\tfrac{1}{12}h_t^2B_h^{-1}A_h$ and
\[
 q_0=q_0(\ve_0^2):=\frac{\bar{\lambda}(\ve_0^2)-1}{\bar{\lambda}(\ve_0^2)+1}=\frac{1-\ve_0^2}{5-\ve_0^2}\leq 0.2\ \ \text{on}\ \ [0,1).
\]
\end{theorem}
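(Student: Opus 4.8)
The plan is to recast \eqref{eqBAb2} as a stationary Richardson-type iteration for the symmetric positive definite operator $\mathcal{A}_{\*h}=D_\rho+\onetwelve h_t^2B_h^{-1}A_h$ and to reduce the whole analysis to a one-sided spectral bound. First I would apply $B_h^{-1}$ to both \eqref{eqBAb} and \eqref{eqBAb2} and subtract; since the exact solution satisfies $\mathcal{A}_{\*h}w=B_h^{-1}b$, the error $z^{(l)}:=w^{(l)}-w$ obeys $z^{(l+1)}=(I-\theta G)z^{(l)}$ with $G:=D_\rho^{-1}\mathcal{A}_{\*h}$, so that $z^{(l)}=(I-\theta G)^lz^{(0)}$ and everything reduces to estimating $\|I-\theta G\|$ in the two norms $\|\cdot\|_{D_\rho}=\|\sqrt{\rho}\,\cdot\|_h$ and $\|\cdot\|_{\mathcal{A}_{\*h}}$.

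The key structural observation is that, although $D_\rho$ and $B_h^{-1}A_h$ need not commute for variable $\rho$, the operator $G=D_\rho^{-1}\mathcal{A}_{\*h}$ is self-adjoint and positive with respect to both inner products $(\cdot,\cdot)_{D_\rho}$ and $(\cdot,\cdot)_{\mathcal{A}_{\*h}}$. Indeed, using $\mathcal{A}_{\*h}^*=\mathcal{A}_{\*h}>0$ (which holds since $A_hB_h=B_hA_h$), one has $(Gu,v)_{D_\rho}=(\mathcal{A}_{\*h}u,v)_h=(u,Gv)_{D_\rho}$, and $(Gu,v)_{\mathcal{A}_{\*h}}=(\mathcal{A}_{\*h}D_\rho^{-1}\mathcal{A}_{\*h}u,v)_h=(u,Gv)_{\mathcal{A}_{\*h}}$ because $\mathcal{A}_{\*h}D_\rho^{-1}\mathcal{A}_{\*h}$ is self-adjoint in $(\cdot,\cdot)_h$. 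Consequently $G$ has a common real spectrum (independent of the chosen inner product), and $I-\theta G$ is self-adjoint in each of the two norms, so its norm equals its spectral radius, $\|I-\theta G\|=\max_{\mu\in\mathrm{spec}(G)}|1-\theta\mu|$.

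Next I would localize $\mathrm{spec}(G)$. Writing the Rayleigh quotient of $G$ in $(\cdot,\cdot)_{D_\rho}$ gives $\mu=1+\onetwelve h_t^2\|u\|_{B_h^{-1}A_h}^2/\|\sqrt{\rho}u\|_h^2$, whence $\mu\geq1$ trivially. For the upper bound I would combine inequality \eqref{ahbh -1}, that is $\|u\|_{B_h^{-1}A_h}\leq\alpha_h\|u\|_h$, with $\|\sqrt{\rho}u\|_h^2\geq\urho\|u\|_h^2$, obtaining $\mu\leq1+\onetwelve h_t^2\alpha_h^2/\urho$; the stability condition \eqref{stabcondN}, namely $\tfrac16 h_t^2\alpha_h^2\leq(1-\ve_0^2)\urho$, then yields $\mu\leq1+\half(1-\ve_0^2)=\bar{\lambda}(\ve_0^2)$. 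Hence $\mathrm{spec}(G)\subset[1,\bar{\lambda}(\ve_0^2)]$.

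Finally I would invoke the elementary minimax, $\min_{\theta>0}\max_{\mu\in[1,\bar{\lambda}]}|1-\theta\mu|$, attained at $\theta_{opt}=2/(1+\bar{\lambda}(\ve_0^2))$ and equal to $(\bar{\lambda}-1)/(\bar{\lambda}+1)=q_0(\ve_0^2)$; substituting $\bar{\lambda}=1+\half(1-\ve_0^2)$ gives $q_0=(1-\ve_0^2)/(5-\ve_0^2)\leq0.2$ on $[0,1)$. Together with the self-adjointness of $I-\theta G$ in each norm this yields $\|z^{(l)}\|\leq q_0^l\|z^{(0)}\|$ in both $\|\cdot\|_{D_\rho}$ and $\|\cdot\|_{\mathcal{A}_{\*h}}$, which is exactly \eqref{conv rate 1 iter meth 1}. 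The main obstacle to anticipate is the non-commutativity of $D_\rho$ and $B_h^{-1}A_h$ for genuinely variable $\rho$: the usual simultaneous-diagonalization shortcut is unavailable, and the argument must instead rest on the \emph{double} self-adjointness of $G$, so that one and the same real spectrum $[1,\bar{\lambda}(\ve_0^2)]$ controls both energy norms simultaneously.
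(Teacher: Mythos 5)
Your proof is correct and follows essentially the same route as the paper: recasting \eqref{eqBAb2} as a Richardson iteration preconditioned by $D_\rho$ and deriving the spectral equivalence $D_\rho\leq\mathcal{A}_{\*h}\leq\bar{\lambda}(\ve_0^2)D_\rho$ from \eqref{ahbh -1} and \eqref{stabcondN}. The only difference is that you prove in-line the standard facts about the optimal parameter, the double self-adjointness of the iteration operator and the resulting contraction in both norms, which the paper simply cites from \cite{SN89}.
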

\begin{proof}
We rewrite equation \eqref{eqBAb} and the iterative method \eqref{eqBAb2} in the canonical forms
\begin{gather}
 \mathcal{A}_{\*h}w=\tilde{b}:= B_h^{-1}b,\ \
 D_\rho w^{(l+1)}=D_\rho w^{(l)}-\theta(\mathcal{A}_{\*h}w^{(l)}-\tilde{b}),\ \ l\geq 0,
\label{IDpBAI}
\end{gather}
with the preconditioner $D_\rho$.
Recall that $D_\rho^*=D_\rho>0$ and $\mathcal{A}_{\*h}^*=\mathcal{A}_{\*h}>0$.
Moreover, under condition \eqref{stabcondN}, the following spectral equivalence inequalities hold
\begin{gather}
  D_\rho \leq \mathcal{A}_{\*h}=D_\rho+\tfrac{1}{12}h_t^2B_h^{-1}A_h
  \leq \bar{\lambda}(\ve_0^2)D_\rho\ \ \text{in}\ \ H_h,
\ \ \text{with}\ \ \bar{\lambda}(\ve_0^2)=1+\half(1-\ve_0^2).
\label{spectr equiv}
\end{gather}
Thus according to the theory of iterative methods in the form \eqref{IDpBAI}, for example, see \cite{SN89}, the optimal vaue of the parameter $\theta$ is $\theta_{opt}$, and the convergence rate estimate \eqref{conv rate 1 iter meth 1} is valid.
\end{proof}

\par We also can consider the $N$-step iterative method with the Chebyshev parameters
\begin{gather}
 w^{(l+1)}=(1-\theta^{(l)})w^{(l)}-\tfrac{\theta^{(l)}}{\rho}B_h^{-1}\big(\tfrac{1}{12}h_t^2A_hw^{(l)}-b\big),\ \
\label{eqBAb2 Cheb 1}\\[1mm]
 \theta^{(l)}:=\frac{\theta_{opt}}{1+q_0\cos\frac{\pi(l+1/2)}{N}},\ \ l=0,\ldots,N-1,
\label{eqBAb2 Cheb 2}
\end{gather}
see much more details in \cite{SN89}.
\begin{theorem}
\label{theo: 2nd iter meth}
Let condition \eqref{stabcondN} on $h_t$ be valid for some $0<\ve_0<1$.
For the $N$-step iterative method \eqref{eqBAb2 Cheb 1}-\eqref{eqBAb2 Cheb 2}, the convergence rate estimate holds
\begin{gather*}
 \|w-w^{(N)}\|\leq \tfrac{2q_1^N}{1+2q_1^N}\|w-w^{(0)}\|\ \ \forall w^{(0)}\in H_h,
\label{conv rate 4 iter meth 1}
\end{gather*}
in two norms $\|\cdot\|=\|\sqrt{\rho}\cdot\|_h$ and $\|\cdot\|_{\mathcal{A}_{\*h}}$, with
\[
 q_1=q_1(\ve_0^2)
 :=\frac{{\bar{\lambda}^{1/2}(\ve_0^2)}-1}{{\bar{\lambda}^{1/2}(\ve_0^2)}+1}
 =\frac{1-\ve_0^2}{5-\ve_0^2+4\sqrt{1+\half(1-\ve_0^2)}}
 \leq\frac{1}{5+4\sqrt{1.5}}\approx 0.1010\ \ \text{on}\ \ [0,1).
\]
\end{theorem}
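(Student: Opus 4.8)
The plan is to reduce the statement to the classical convergence theory of the $N$-step Chebyshev (Richardson-type) method, exactly as Theorem \ref{theo: 1st iter meth} reduced the one-step method to the standard one-step theory of \cite{SN89}. First I would recast the practical scheme \eqref{eqBAb2 Cheb 1}--\eqref{eqBAb2 Cheb 2} in canonical form with the preconditioner $D_\rho$. Applying $D_\rho$ to \eqref{eqBAb2 Cheb 1} and using $\onetwelve h_t^2 B_h^{-1}A_h=\mathcal{A}_{\*h}-D_\rho$ together with $\tilde b=B_h^{-1}b$, the cross terms in $D_\rho w^{(l)}$ cancel and one obtains, as in \eqref{IDpBAI},
\[
 D_\rho w^{(l+1)} = D_\rho w^{(l)} - \theta^{(l)}\big(\mathcal{A}_{\*h}w^{(l)} - \tilde b\big),\quad l=0,\dots,N-1,
\]
i.e. Richardson's method for $\mathcal{A}_{\*h}w=\tilde b$ with the variable parameters $\theta^{(l)}$ and preconditioner $D_\rho$.

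Next I would track the error $z^{(l)}:=w-w^{(l)}$. Since $\mathcal{A}_{\*h}w^{(l)}-\tilde b=-\mathcal{A}_{\*h}z^{(l)}$, it satisfies
\[
 z^{(l+1)} = \big(I-\theta^{(l)}C\big)z^{(l)},\quad C:=D_\rho^{-1}\mathcal{A}_{\*h},\qquad z^{(N)}=P_N(C)\,z^{(0)},
\]
where $P_N(s)=\prod_{l=0}^{N-1}(1-\theta^{(l)}s)$ is a polynomial of degree $N$ normalized by $P_N(0)=1$. The point of the choice \eqref{eqBAb2 Cheb 2} is that these $\theta^{(l)}$ are precisely the reciprocals of the scaled-and-shifted roots of the Chebyshev polynomial on the spectral interval of $C$, so that $P_N$ is the optimal polynomial minimizing $\max_{s}|P_N(s)|$ over that interval among all degree-$N$ polynomials with $P_N(0)=1$.

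The two facts I would then supply are: (i) the spectral-equivalence bounds \eqref{spectr equiv}, already established under \eqref{stabcondN} in the proof of Theorem \ref{theo: 1st iter meth}, which localize $\mathrm{spec}(C)\subset[1,\bar\lambda(\ve_0^2)]$, so the condition number equals $\bar\lambda(\ve_0^2)$ and the Chebyshev rate is $(\bar\lambda^{1/2}-1)/(\bar\lambda^{1/2}+1)=q_1$; and (ii) the self-adjointness of $C=D_\rho^{-1}\mathcal{A}_{\*h}$ with respect to both the $D_\rho$-inner product and the $\mathcal{A}_{\*h}$-inner product, which holds because $D_\rho$ and $\mathcal{A}_{\*h}$ are both self-adjoint positive definite. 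Fact (ii) is what makes the scalar spectral estimate $\|P_N(C)\|=\max_{\lambda\in\mathrm{spec}(C)}|P_N(\lambda)|$ valid \emph{simultaneously} in $\|\cdot\|=\|\sqrt{\rho}\,\cdot\|_h$ (the $D_\rho$-norm, since $\|\sqrt{\rho}w\|_h^2=(D_\rho w,w)_h$) and in $\|\cdot\|_{\mathcal{A}_{\*h}}$, giving the two-norm claim. Combining with the classical estimate $\max_{s\in[1,\bar\lambda(\ve_0^2)]}|P_N(s)|=\tfrac{2q_1^N}{1+q_1^{2N}}$ from \cite{SN89} yields the asserted contraction factor, and the closed form and numerical bound for $q_1$ follow by rationalizing $q_1=(\bar\lambda^{1/2}-1)/(\bar\lambda^{1/2}+1)$ with $\bar\lambda(\ve_0^2)=1+\half(1-\ve_0^2)$, the maximum $\approx0.1010$ being attained as $\ve_0\to0$.

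Because everything after the canonical-form reduction is the standard Chebyshev theory cited in \cite{SN89}, there is no genuine analytic obstacle here; the only steps requiring care are the algebraic bookkeeping in the canonical-form reduction and, above all, the verification that $C$ is self-adjoint in \emph{both} energy inner products, since this is exactly what upgrades the single polynomial estimate from one norm to both norms at once.
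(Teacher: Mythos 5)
Your proposal is correct and follows essentially the same route as the paper, whose proof simply invokes the classical $N$-step Chebyshev theory of \cite{SN89} together with the spectral equivalence inequalities \eqref{spectr equiv}; you merely spell out the canonical-form reduction, the error polynomial, and the two-norm self-adjointness argument that the citation leaves implicit. The only discrepancy is that the classical contraction factor you quote is $2q_1^N/(1+q_1^{2N})$, whereas the theorem as printed has $2q_1^N/(1+2q_1^N)$; the printed denominator appears to be a typo, and your version is the standard (and slightly weaker, hence safe) one.
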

\begin{proof}
The result is valid due to the theory of the $N$-step iterative methods, for example, see \cite{SN89}, taking into account the spectral equivalence inequalities \eqref{spectr equiv}.
\end{proof}

\par Let us discuss the convergence rates of the suggested iterative methods.
Importantly, $q_0$ and $q_1$ are \textit{independent of both the meshes and $\rho$}, in particular, the spread of its values $\hat{\rho}=\orho/\urho$ with $\rho(x)\leq \orho$ on $\bar{\Omega}$.
The last point is essential for some applications.
In the typical case $\ve_0^2=\half$, one has $q_0(\half)=\frac19\approx 0.1111$.
For the $\sqrt2$ times stronger condition \eqref{stabcondN} on $h_t$ with
$\ve_0^2=\frac34$, one has already $q_0(\frac34)\approx 0.05882$.
Recall that often the much higher common ratio $q_0=0.5$ is considered as good.
\par One has also, in particular, $q_1(\half)\approx 0.05573$ and $q_1(\frac34)\approx 0.02944$.
It is easy to see that
\[
 0.5<\frac{q_1(\ve_0^2)}{q_0(\ve_0^2)}\leq\frac{5}{5+4\sqrt{1.5}} \approx 0.5051\ \ \text{on}\ \ [0,1),
\]
thus the iterative method \eqref{eqBAb2 Cheb 1}-\eqref{eqBAb2 Cheb 2} is much faster than \eqref{eqBAb2},
as well as $q_0,q_1$ and $\frac{q_1}{q_0}$ decrease on $[0,1)$.
Moreover, $q_l(\ve_0^2)\to 0$ as $\ve_0\to 1-0$, $l=0,1$, i.e., the common ratios become \textit{arbitrarily small} as condition \eqref{stabcondN} on $h_t$ turns more and more stronger.

\par It is well-known that often the variational counterparts of the above iterative methods, namely, the steepest descent and conjugate gradient methods are more preferable.
Here we do not come into details and mention only that in the former method the parameter $\theta=\theta_l$ is defined such that
\begin{gather*}
 \|w-w^{(l+1)}(\theta_l)\|_{\mathcal{A}_{\*h}}=\min_{\theta>0}\|w-w^{(l+1)}(\theta)\|_{\mathcal{A}_{\*h}}.
\label{steep desc}
\end{gather*}
The explicit formula for $\theta_l$ (for example, see \cite{SN89}) is given by the formula
\[
 \theta_l=\frac{(D_\rho y^{(l)},y^{(l)})_h}{(\mathcal{A}_{\*h}y^{(l)},y^{(l)})_h}
 =\frac{\|\sqrt{\rho}y^{(l)}\|_h^2}{\|\sqrt{\rho}y^{(l)}\|_h^2+\tfrac{1}{12}h_t^2\big(B_h^{-1}A_hy^{(l)},y^{(l)}\big)_h},\,\
 y^{(l)}:=w^{(l)}+\tfrac{1}{\rho}B_h^{-1}\big(\tfrac{1}{12}h_t^2A_hw^{(l)}-b\big).
\]

\par The above iterative methods can be generalized for equation \eqref{eqBAb} with any $\sigma\neq 0$ instead of $\frac{1}{12}$ that is essential, in particular, for implementation of the scheme from Section \ref{uncond stable scheme} (no methods to this end were described in \cite{HLZ19}).

\par Concerning the initial guess for methods \eqref{eqBAb2} and \eqref{eqBAb2 Cheb 1}-\eqref{eqBAb2 Cheb 2}, one can base simply on the formula $v^{m+1,(0)}=v^m$, for $0\leq m\leq M-1$, or $v^{m+1,(0)}=2v^m-v^{m-1}$, for $1\leq m\leq M-1$.
But it seems much better to use closely related equations \eqref{3level sch1}-\eqref{3level sch2} for $\sigma=0$ in the form:
\begin{gather}
 (\Lambda_tv)^{m,(0)}=-\tfrac{1}{\rho}B_h^{-1}(A_hv^m-f^m)\ \ \text{in}\ \ H_h,\ \ 1\leq m\leq M-1,
\label{initial1}\\[1mm]
 (\delta_tv^0)^{(0)}=-\tfrac{1}{\rho}B_h^{-1}\big(\half h_tA_hv^0-u_1-\half h_tf^0\big)\ \ \text{in}\ \ H_h,
\nonumber
\end{gather}
and this expectation is confirmed in numerical experiments.
Here applying $B_h^{-1}$ can be again effectively implemented by FFT.
Note that a discussion on the choice of the initial guess can be found in \cite{BTT18}.

\smallskip\textbf{\ref{numerexperiments}.2.}
Now we describe results of our numerical experiments.
To be definite, we take $n=2$ and use mainly scheme \eqref{num1eq}-\eqref{num1ic} that below we call \textit{scheme $S_0$};
we also apply the second formula \eqref{ftd02} to compute $f_N^0$.
In order to compare the results with those presented
in literature, we solve two test problems from \cite{HLZ19} including the wave propagation in a
the three-layer medium for the square spatial mesh and also take one more problem for the rectangular one.
Our numerical tests have been performed on the computer with
Intel\textsuperscript{\textregistered} Xeon\textsuperscript{\textregistered}
processor E5-2670, 8GB RAM, and the algorithm has been implemented using $C$++ language.
\par We rewrite the IBVP \eqref{hyperb2eq}-\eqref{hyperb2ibc} for $n=2$ and $g=0$ as
\begin{gather*}
 \partial_t^2 u - c^2(x,y)(\partial_x^2u+\partial_y^2u)=\varphi(x,y,t)\ \ \text{for}\ \ (x,y) \in [0,X]\times [0,X],\ 0 <t\leq T,
\\[1mm]
 u|_{\Gamma_T}=0,\ \
 u(x,y,0) = u_0(x,y),\ \ \partial_tu(x,y,0)=u_1(x,y)\ \ \text{for}\ \ (x,y) \in [0,X]\times [0,X].
\end{gather*}

{\bf Example 1.} First we take $X=T=2$, $c^2(x,y) = 1+\big(\frac{\pi x}{8}\big)^2+\big(\frac{\pi y}{8}\big)^2$.
The data $u_0(x)$, $u_1=0$ and $\varphi(x,y,t)$ are chosen so that the solution is the simple standing wave
$u(x,y,t) =\sin(\pi x)\sin(\pi y)\cos(\pi t)$ as in \cite{HLZ19}.
\par Table~\ref{tab1} contains the errors $e_{L^2}(N)$ and $e_{L^\infty}(N)$ in the mesh $L_2$ and $L_\infty$ norms (i.e., in $H_h$ and the mesh uniform norms) at $t=T$
together with the corresponding experimental convergence rates:
\[
 p_{L^q}(N) = \Big.\log \frac{e_{L^q}(N)}{e_{L^q}(N/2)}\Big\slash \log 2,\ \
 q=2,\infty.
\]
Here we take $h_x=h_y=h=\frac{X}{N}$.
Also hereafter
$N_{iter}$ denotes the maximal number of iterations \eqref{eqBAb2}
required to solve the systems of equations with the given tolerance $10^{-10}$.
CPU time is also included.
Several spatial steps $h$
are used, and due to the stability condition the time step is restricted to $h_t=0.25h$.
\begin{table}[ht]
\begin{center}
\caption{Example 1: errors $e_{L^q}(N)$,
convergence rates $p_{L^q}(N)$, numbers of iterations $N_{iter}$
and CPU times for a sequence of meshes}
\label{tab1}
\vspace{2mm}
\begin{tabular}{lccccccc}
\hline\noalign{\smallskip}
$N$ & $h_x= h_y$ & $e_{L^2}(N)$ & $ p_{L_2}(N)$ & $e_{L^\infty}(N)$ & $ p_{L_\infty}(N)$ & $N_{iter}$ & CPU time  \\
\noalign{\smallskip}
\hline
\noalign{\smallskip}
 8 & 1/4  &  3.3660e-3&  ---   & 3.5483e-3 & ---  & 6 & $0.001$ s  \\
16 & 1/8  & 2.0104e-4 &  4.065 & 2.2719e-4 & 3.965 & 6 & $0.012$ s \\
32 & 1/16 & 1.2128e-5 &  4.051 & 1.4623e-5 & 3.958 & 5 & $0.085$ s \\
64 & 1/32 & 7.4564e-7 &  4.023 & 9.1493e-7 & 3.998 & 5 & $0.608$ s \\
\hline
\end{tabular}
\end{center}
\end{table}

\par Clearly scheme  $S_0$ demonstrates the 4th order accuracy in both norms.
The obtained $L^2$ errors are about 5 times more accurate than those in \cite[Table 12]{HLZ19}.
Also it can be seen that $N_{iter}$ is small, and the CPU time is approximately proportional
to the size of the discrete problem.

\par Next we investigate in more details  the convergence of the proposed iterative method \eqref{eqBAb2}
with the initial guess defined by \eqref{initial1}.
The given problem is solved for different values of $\varepsilon_0^2$ and the number $M$ defining the time step $h_t=\frac{T}{M}$.
Table~\ref{tab1b} contains the values of $N_{iter}$
for $h_x=\frac{1}{32}$.
For comparison, in brackets we also present its values
when a simple guess $w^{(0)}=w$ is used.
We observe that
the convergence of the iterative method \eqref{eqBAb2} with the initial guess defined by \eqref{initial1} is very fast requiring no more than 5 iterations to reach the high tolerance
$10^{-10}$, and its rate is only slightly sensitive to the value of the parameter $\theta$.
The role of this initial guess is essential since it reduces $N_{iter}$ at least twice.
Still this dependence can become more pronounced for not so smooth solutions when errors in high modes are
more important.
\begin{table}[ht]
\begin{center}
\caption{Example 1:
$N_{iter}$ for different $M$ and parameters $\theta$ in \eqref{eqBAb2}.}
\label{tab1b}
\vspace{2mm}
\begin{tabular}{lccc}
\hline
\noalign{\smallskip}
$M$ & $\theta =\frac89\ (\varepsilon_0^2=\frac12)$ & $\theta=\frac{16}{17}\ (\varepsilon_0^2=\frac34)$
& $ \theta = \frac{32}{33}\ (\varepsilon_0^2=\frac78)$\\
\noalign{\smallskip}
\hline
\noalign{\smallskip}
256 & 5 (10) &  5 (9) &  5 (9) \\
512 & 5 (10) &  4 (9) &  4 (8) \\
1024 & 4 (10) &  4 (9) &  3 (8) \\
2048 & 4 (9)$\hphantom0$ &  3 (8) &  3 (8) \\
\hline
\end{tabular}
\end{center}
\end{table}

\smallskip\par{\bf Example 2.}
Next we take $X=T=1$,  $c^2(x,y)=(1+x^2+4y^2)^{-1}$.
The data $u_0$, $u_1$ and $\varphi$ are chosen so that the solution is the simple standing wave
$u(x,y,t) =\sin(\pi x)\sin(4 \pi y)\exp(t)$.
In this example, the wave propagation in $x$ and $y$ directions is different, thus the mesh steps $h_x=\frac{1}{N}\neq h_y=\frac{1}{4N}$ are taken.

\par Table~\ref{tab2a} contains the errors $e_{L^2}(N)$ and $e_{L^\infty}(N)$
at $t=1$ together with the corresponding experimental convergence rates for scheme $S_0$.
Clearly the scheme is robust for $h_x\neq h_y$ as well.
\begin{table}[ht]
\begin{center}
\caption{Example 2: errors $e_{L^q}(N)$ and convergence rates $p_{L^q}(N)$ of
the solution to scheme $S_0$, i.e., \eqref{num1eq}-\eqref{num1ic}, for a sequence of meshes}
\label{tab2a}
\vspace{2mm}
\begin{tabular}{lccccccc}
\hline\noalign{\smallskip}
$N$ & $h_x$ & $ h_y$ & $h_t$ & $e_{L^2}(N)$ & $ p_{L_2}(N)$ & $e_{L^\infty}(N)$ & $ p_{L_\infty}(N)$ \\
\noalign{\smallskip}
\hline
\noalign{\smallskip}
4  & 1/4  & 1/16  & 1/32  &  3.3710e-3 &  ---   & 3.6410e-3 & ---   \\
8  & 1/8  & 1/32  & 1/64  &  1.9822e-4 &  4.088 & 2.3470e-4 & 3.955 \\
16 & 1/16 & 1/64  & 1/128 &  1.1960e-5 &  4.051 & 1.4849e-5 & 3.982 \\
32 & 1/32 & 1/128 & 1/256 &  7.2937e-7 &  4.035 & 9.2547e-7 & 4.004 \\
\hline
\end{tabular}
\end{center}
\end{table}

\par For comparison, we solve the same problem by using the modified 4th order scheme \eqref{num2eq}-\eqref{num2ic} (suitable for any $n$) and put the same type results in Table~\ref{tab2b}.
The results for both schemes are very close thus for other tests we apply only the former one.
Nevertheless we note carefully that all the errors are (very) slightly larger for the latter scheme;
this is since it exploits the more dissipative in space operator $\bar{s}_N=s_N+\frac{h_x^2}{12}\frac{h_y^2}{12}\Lambda_x\Lambda_y$ rather than $s_N$ in the former scheme.
\begin{table}[ht]
\begin{center}
\caption{Example 2: errors $e_{L^q}(N)$ and convergence rates $p_{L^q}(N)$ of
the solution to scheme \eqref{num2eq}-\eqref{num2ic} for a sequence of meshes}
\label{tab2b}
\vspace{2mm}
\begin{tabular}{lccccccc}
\hline\noalign{\smallskip}
$N$ & $h_x$ & $ h_y$ & $h_t$ & $e_{L^2}(N)$ & $ p_{L_2}(N)$ & $e_{L^\infty}(N)$ & $ p_{L_\infty}(N)$ \\
\noalign{\smallskip}
\hline
\noalign{\smallskip}
4  & 1/4  & 1/16  & 1/32  &  3.4940e-3 &  ---   & 3.7327e-3 & ---  \\
8  & 1/8  & 1/32  & 1/64  &  2.0533e-4 &  4.089 & 2.4078e-4 & 3.956 \\
16 & 1/16 & 1/64  & 1/128 &  1.2386e-5 &  4.051 & 1.5246e-5 & 3.981 \\
32 & 1/32 & 1/128 & 1/256 &  7.5548e-7 &  4.035 & 9.5043e-7 & 4.004 \\
\hline
\end{tabular}
\end{center}
\end{table}

\smallskip\par{\bf Example 3.} Finally, the wave propagation is studied in
the three-layer medium with the sound speeds $s_1$, $s_2$ and $s_3=s_1$ (unless otherwise stated) respectively in its left, middle and right layers of the same thickness.
Here we take $X=Y=3000$ $m$ $=3$ $km$.
The source is defined as the Ricker-type wavelet known in geophysics and given by
\[
 \varphi(x,y,t) = \delta (x-x_0, y-y_0) \sin (50 t) e^{-200 t^2},
\]
where $\delta (x-x_0, y-y_0)$ is the Dirac distribution located at the center of domain $(x_0, y_0) = (1500\, m, 1500\, m)$.
Also we take $u_0=u_1=0$.
It was shown in \cite{HLZ19} that the wave dynamics is complicated.
The computational challenges arise due to discontinuous coefficient $c^2$ and
the very non-smooth distributional source function $\varphi$.

\par We take $h_x= h_y =h=\frac{X}{N}$ with even $N$ and approximate $\delta (x-x_0,y-y_0)$ as the mesh delta-function that equals $h^{-2}$ at the node $(x_0,y_0)$ and 0 at other nodes according to \eqref{avereq}.

\par Let first  $s_1=1500$ and $s_2=1000$ $m/s$ as in \cite{HLZ19}.
Figure~\ref{fig12}(a) shows 1D profiles of waves at $y=1.5$ $km$ for various times in the three-layer medium.
At $t=0.25$, the wave moves still inside the middle layer only.
At $t=0.75$, the wave fronts have already passed the interfaces of layers, have decreased their amplitude and move through the left and right layers towards the boundary;
simultaneously, the reflected waves of much smaller amplitude move back inside the middle layer.
At $t=1.05$, both reflected waves collide and acquire larger amplitude.
Then they continue their movement as shown at $t=1.15$.

\par For comparison, Figure~\ref{fig12}(b) shows 1D profiles of waves at $y=1.5$ $km$
in the homogeneous medium for $s_1=s_2=1000$ $m/s$.
Now only the refraction wave exists and moves towards the boundary with a constant velocity;
the graphs on the both figures are the same at $t=0.25$.
\begin{figure}[ht!]\centering
\subfigure[]{
\includegraphics[width=0.48\linewidth]{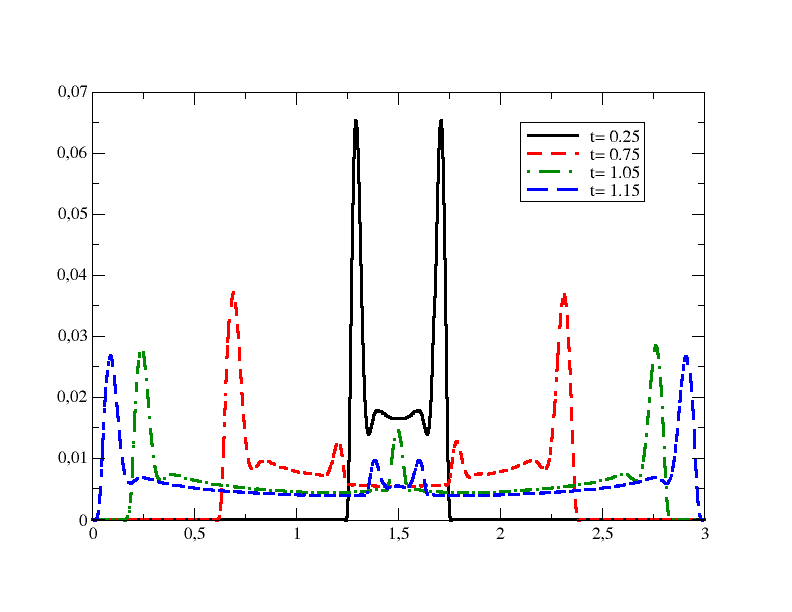}}
\subfigure[]{
\includegraphics[width=0.48\linewidth]{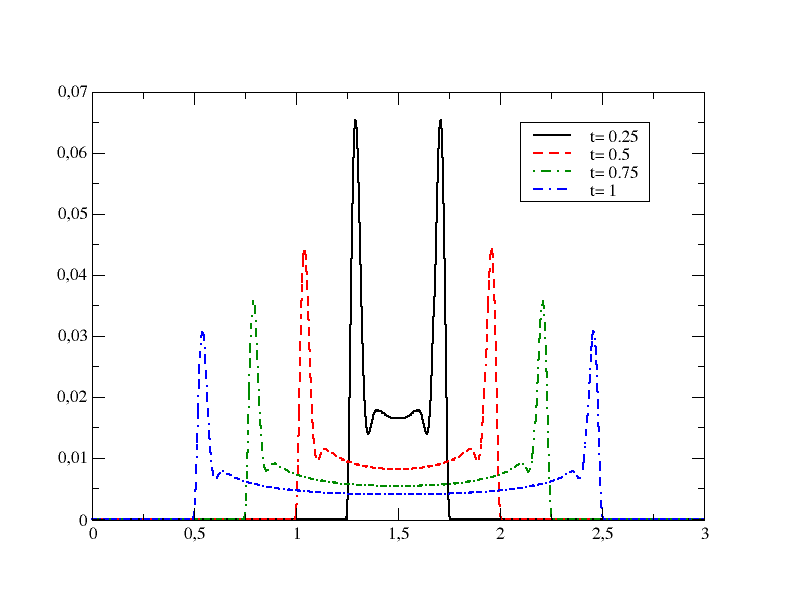}}
\caption{Dynamics of the waves at different times for: (a) the three-layer medium; (b) the homogeneous medium for $s_1=s_2=1000$ $m/s$}
\label{fig12}
\end{figure}

\par Next, in Figure~\ref{fig3} we present the dynamics of the waves
at $y=1.5$ $km$ in the case of three different sound speeds $s_1=1500$, $s_2=1000$ and $s_3=3000$.
At $t=0.25$, the graph is the same once again.
At $t=0.6$ and $t=0.7$, the wave fronts have already passed the interfaces of layers.
In contrast to Figure~\ref{fig12}, the amplitudes and speeds of the right refracted and reflected waves are higher than of the left ones.
\begin{figure}[ht]
\centerline{
\includegraphics[width=0.48
\textwidth]{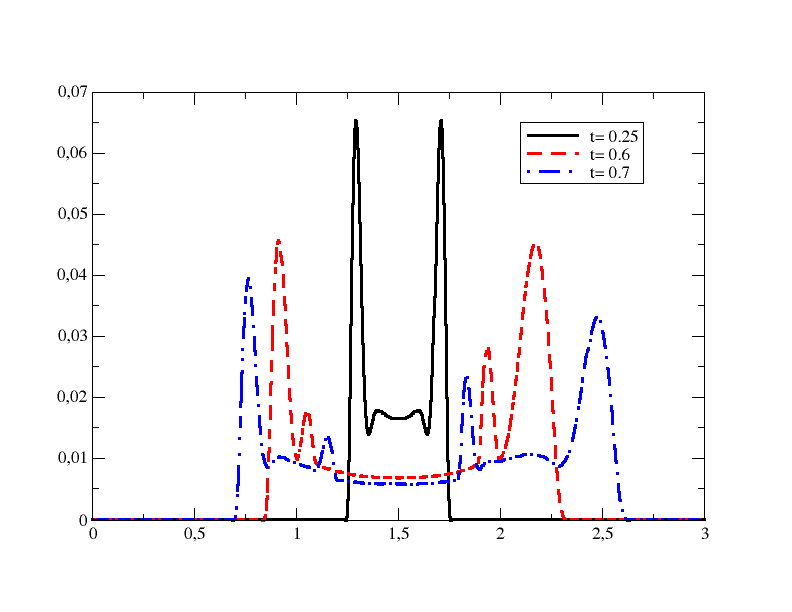}
}
\caption{Dynamics of the waves at different times
for the three-layer medium with $s_1=1500$, $s_2 = 1000$ and $s_3=3000$ $m/s$}
\label{fig3}
\end{figure}

\par In addition, we investigate experimentally the robustness of our
iterative method with respect to jumps in the sound speed and the convergence order of scheme $S_0$.
Such an analysis was not done in \cite{HLZ19}.
Table~\ref{tab2} contains the values of $N_{iter}$
for different speeds $s_1$ together with $s_2= 1000$ m/s.
In computations, the space steps are $h= 15$ and $7.5$ $m$; the time steps $h_t$ are respectively selected
from the stability requirement.
The presented results confirm that the iterative method \eqref{eqBAb2}
with the initial guess defined by \eqref{initial1} is both robust and fast.
\begin{table}[ht]
\begin{center}
\caption{Example 3:
$N_{iter}$ for different speeds $s_1$
in the left and right layers}
\label{tab2}
\vspace{2mm}
\begin{tabular}{ccclcclc}
\hline\noalign{\smallskip}
$s_1$ & $T$ & $h$ & $h_t$ & $N_{iter}$ & $h$ & $h_t$ & $N_{iter}$  \\
\noalign{\smallskip}
\hline
\noalign{\smallskip}
1000 & 1.0 & 15 & 0.005  & 9 & 7.5 & 0.0025 & 9 \\
1500 & 0.8 & 15 & 0.004  & 9 & 7.5 & 0.002  & 9 \\
3000 & 0.6 & 15 & 0.002  & 9 & 7.5 & 0.001  & 9 \\
6000 & 0.6 & 15 & 0.0012 & 9 & 7.5 & 0.0006 & 9 \\
\hline
\end{tabular}
\end{center}
\end{table}

\par Table~\ref{tab3} contains the errors $\bar{e}_{L^2}(N)$ and $e_{L^\infty}(N)$ in the mesh
scaled
$L_2$ and $L_\infty$ norms at $t= 0.8$, for $h =\frac{X}{N}$, with $N=100, 200, 400$, and $h_t=\frac{0.8}{N}$.
The approximations to these errors are computed as
\[
 \bar{e}_{L^2}(N) = \frac{1}{X}\|v_h - v_{h/2}\|_{L^2},\ \
 e_{L^\infty}(N) = \|v_h - v_{h/2}\|_{L^\infty},
\]
where $X$ equals the square root of the domain area, and $v_h$ is the solution to the scheme $S_0$ for $h =\frac{X}{N}$.
The computations are accomplished for the homogeneous case $s_1=s_2=1000$ $m/s$ and three-layer one with $s_1=1500$ and $s_2=1000$ $m/s$.
We see that since the exact solution is a non-smooth function, the convergence rates are essentially reduced, and they are visibly higher in a simpler case of the constant sound speed.
The results in $L^2$ norm are much better than in $L^\infty$ one.
Both of these last details are natural.
\begin{table}[ht]
\begin{center}
\caption{Example 3: errors $\bar{e}_{L^2}(N)$ and $e_{L^\infty}(N)$
and convergence rates $p_{L^q}(N)$  of
for a sequence of meshes and two speeds $s_1=1000$ and 1500 in the left and right layers}
\label{tab3}
\vspace{2mm}
\begin{tabular}{cccccccc}
\hline\noalign{\smallskip}
$s_1$ & $N$& $h$ & $h_t$ & $\bar{e}_{L^2}(N)$ &$p_{L_2}(N)$ & $e_{L^\infty}(N)$ & $ p_{L_\infty}(N)$  \\
\noalign{\smallskip}
\hline
\noalign{\smallskip}
1000 &100 & 30  & 0.008 &  1.78919e-3 & ---   & 0.012093 & ---   \\
1000 &200 & 15  & 0.004 &  4.04097e-4 & 2.146 & 0.004069 & 1.571  \\
1000 &400 & 7.5 & 0.002 &  9.88333e-5 & 2.032 & 0.001387 & 1.553 \\
\hline
1500 &100 & 30  & 0.008 &  2.01559e-3 & ---   & 0.012093 & --- \\
1500 &200 & 15  & 0.004 &  6.18800e-4 & 1.704 & 0.005448 & 1.150 \\
1500 &400 & 7.5 & 0.002 &  2.11363e-4 & 1.550 & 0.002736 & 0.994 \\
\hline
\end{tabular}
\end{center}
\end{table}

\par For comparison, we also investigate the accuracy of the standard explicit 2nd order scheme
$\Lambda_tz-c^2(\Lambda_x+\Lambda_y)z=\varphi$ for the same tests as given in Table~\ref{tab3}.
Table~\ref{tab5} contains the errors $\bar{e}_{L^2}(N)$
and $e_{L^\infty}(N)$ in the mesh scaled $L_2$ and $L_\infty$ norms at $t= 0.8$, for
$h=\frac{X}{N}$, $N=100, 200, 400$, and $h_t=\frac{0.8}{N}$.
Here the errors are computed as
\[
 \bar{e}_{L^2}(N) = \frac{1}{X}\|z_h- v_{h_0}\|_{L^2},\ \ \bar{e}_{L^\infty}(N)=\|z_h-v_{h_0}\|_{L^\infty},
\]
where $v_{h_0}$ is the solution of scheme $S_0$ for $h_0=\frac{X}{800}$ and $h_t = 0.001$ and
$z_h$ is the solution of the explicit 2nd order scheme.
Clearly, for the 2nd order scheme, the errors are larger and the convergence rates are worse than for scheme $S_0$, thus the latter scheme is better in the non-smooth case as well (the same practical conclusion for $n=1$ is done in \cite{ZK20}).
\begin{table}[ht]
\begin{center}
\caption{Example 3: errors
$\bar{e}_{L^2}(N)$ and $\bar{e}_{L^\infty}(N)$ and convergence rates $p_{L^q}(N)$ for
the standard explicit 2nd order scheme for a sequence of meshes and $s_1=1000$}
\label{tab5}
\vspace{2mm}
\begin{tabular}{cccccccc}
\hline\noalign{\smallskip}
$s_1$ & $N$& $h$ & $h_t$ & $\bar{e}_{L^2}(N)$ &$p_{L_2}(N)$ & $\bar{e}_{L^\infty}(N)$ & $ p_{L_\infty}(N)$  \\
\noalign{\smallskip}
\hline
\noalign{\smallskip}
1000 &200 & 15   & 0.004 & 2.57470e-3 & ---   & 0.015435 & ---  \\
1000 &400 & 7.5  & 0.002 & 9.75537e-4 & 1.400 & 0.008072 & 0.935 \\
1000 &800 & 3.75 & 0.001 & 3.18427e-4 & 1.615 & 0.004047 & 0.996 \\
\hline
\end{tabular}
\end{center}
\end{table}

\medskip\par\noindent
\textbf{Acknowledgements}
\medskip\par
The work of the first author was supported by the Russian Science Foundation, project no. 19-11-00169.

\medskip\noindent\textbf{Availability of Data and Materials} The datasets generated during the current study are available from the corresponding author on reasonable request. They support our published claims and comply with field standards.

\smallskip\noindent\textbf{Compliance with Ethical Standards}

\smallskip\noindent\textbf{Conflict of interest} There is no any conflict of interests/competing interests to declare that are relevant to the content of this article.

\smallskip\noindent\textbf{Code Availability} (software application or custom code) Our custom codes are not publicly available. They support our published claims and comply with field standards.

\end{document}